\newcommand{\bbF}{\mathbb{F}}
\newcommand{\bbN}{\mathbb{N}}
\newcommand{\calU}{\mathcal{U}}
\newcommand{\calV}{\mathcal{V}}
\newcommand{\calW}{\mathcal{W}}
\newcommand{\rmA}{\mathrm{A}}
\newcommand{\rmB}{\mathrm{B}}
\newcommand{\rmE}{\mathrm{E}}
\newcommand{\rmF}{\mathrm{F}}
\newcommand{\rmK}{\mathrm{K}}
\newcommand{\rmL}{\mathrm{L}}
\newcommand{\rmN}{\mathrm{N}}
\newcommand{\rmP}{\mathrm{P}}
\newcommand{\rmS}{\mathrm{S}}
\newcommand{\rmY}{\mathrm{Y}}
\newcommand{\bfB}{\mathbf{B}}
\newcommand{\bfC}{\mathbf{C}}
\newcommand{\bfD}{\mathbf{D}}
\newcommand{\bfG}{\mathbf{G}}
\newcommand{\bfL}{\mathbf{L}}
\newcommand{\bfM}{\mathbf{M}}
\newcommand{\bfN}{\mathbf{N}}
\newcommand{\bfP}{\mathbf{P}}
\newcommand{\bfQ}{\mathbf{Q}}
\newcommand{\bfS}{\mathbf{S}}
\newcommand{\bfT}{\mathbf{T}}
\newcommand{\bfW}{\mathbf{W}}
\newcommand{\fib}{\mathrm{fib}}
\newcommand{\fc}{\mathrm{fc}}
\newcommand{\op}{\mathrm{op}}
\DeclareMathOperator{\Ho}{Ho}
\DeclareMathOperator{\id}{id}
\DeclareMathOperator{\Id}{Id}
\DeclareMathOperator{\Tot}{Tot}
\DeclareMathOperator{\Aut}{Aut}
\newcommand{\h}{\mathrm{h}}
\DeclareMathOperator{\Z}{Z}
\newcommand{\hoZ}{\Z^\h}
\DeclareMathOperator{\Map}{Map}
\DeclareMathOperator{\Nat}{Nat}
\newcommand{\hoNat}{\Nat^\h}
\DeclareMathOperator{\LKan}{LKan}
\newcommand{\hoLKan}{\LKan^\h}
\DeclareMathOperator{\Set}{Set}
\newcommand{\sSet}{\Set_\Delta}
\DeclareMathOperator{\Pre}{Pre}
\newcommand{\sPre}{\Pre_\Delta}
\DeclareMathOperator{\Alg}{Alg}
\newcommand{\sAlg}{\Alg_\Delta}
\newcommand{\hAlg}{\Alg_\Delta^\h}
\newtheorem{theorem}{Theorem}
\newtheorem{proposition}[theorem]{Proposition}
\newtheorem{corollary}[theorem]{Corollary}
\theoremstyle{definition}
\newtheorem{definition}[theorem]{Definition}
\newtheorem{example}[theorem]{Example}
\newtheorem{examples}[theorem]{Examples}
\newtheorem{remark}[theorem]{Remark}
\numberwithin{theorem}{section}
\numberwithin{equation}{section}
\title{Frobenius and the derived centers\\ of algebraic theories}
\author{William G. Dwyer and Markus Szymik}
\date{October 2014}
\begin{document}

\maketitle

\renewcommand{\abstractname}{}

\begin{abstract}
\noindent We show that the derived center of the category of simplicial algebras over every algebraic theory is homotopically discrete, with the abelian monoid of components isomorphic to the center of the category of discrete algebras. For example, in the case of commutative algebras in characteristic~$p$, this center is freely generated by Frobenius. Our proof involves the calculation of homotopy coherent centers of categories of simplicial presheaves as well as of Bousfield localizations. Numerous other classes of examples are discussed.
\end{abstract}

%%%

\section{Introduction}

Algebra in prime characteristic~$p$ comes with a surprise: For each commutative ring~$A$ such that~$p=0$ in~$A$, the~$p$-th power map~$a\mapsto a^p$ is not only multiplicative, but also additive. This defines Frobenius~$\rmF_A\colon A\to A$ on commutative rings of characteristic~$p$, and apart from the well-known fact that Frobenius freely generates the Galois group of the prime field~$\bbF_p$, it has many other applications in algebra, arithmetic and even geometry. Even beyond fields, Frobenius is natural in all rings~$A$: Every map~$g\colon A\to B$ between commutative rings of characteristic~$p$ (i.e.~commutative~{\em$\bbF_p$-algebras}) commutes with Frobenius in the sense that the equation~\hbox{$g\circ\rmF_A=\rmF_B\circ g$} holds. In categorical terms, the Frobenius lies in the {\em center} of the category of commutative~$\bbF_p$-algebras. Furthermore, Frobenius freely generates the center: If~\hbox{$(\rmP_A\colon A\to A\,|\,A)$} is a family of ring maps such that
\begin{equation}\label{eq:commute}
g\circ\rmP_A=\rmP_B\circ g\tag{$\star$}
\end{equation}
holds for all~$g$ as above, then there exists an integer~$n\geqslant 0$ such that the equation~$\rmP_A=(\rmF_A)^n$ holds for all~$A$.

There are good reasons to consider a substantially richer category than the category of commutative~$\bbF_p$-algebras: the category of simplicial commutative~$\bbF_p$-algebras~\cite{Andre, Quillen:commutative_rings, Illusie:I, Illusie:II}. For example, the cotangent complexes in deformation theory are calculated using simplicial resolutions. In this more general context, one may wonder again what the natural operations are: families~\hbox{$(\rmP_A\colon A\to A\,|\,A)$} of simplicial ring maps as above such that~\eqref{eq:commute} holds for all simplicial ring maps~$g$. While the answer to this question is contained here as a special case of a part of our main result, there is more to the situation that should not be ignored. 

First, the simplicial structure on the objects leads to a simplicial structure on the set of all such operations, so that there even is a {\em space} of natural operations. Secondly, and more significantly, the simplicial structure  leads to an ambient homotopy theory, and the question one should better ask in this situation replaces the condition imposed by equation~\eqref{eq:commute} by a homotopy coherent structure. Homotopy coherent analogs of strict notions are obtained by replacing points in spaces that satisfy equalities such as~\eqref{eq:commute} by points, together with coherence data such as paths between both sides of the equations, and that is usually completed by even higher dimensional structure. For the notion of the center of a category, a coherent analog has been described in detail in~\cite{Szymik}, and it will also be reviewed here in Section~\ref{sec:centers}.

%%%

Our results apply to much more general situations than commutative~$\bbF_p$-algebras: We work in the context of algebraic theories in the sense of Lawvere~\cite{Lawvere}. This concept encompasses algebraic structures that are defined in terms of objects~$A$ together with operations~$A^n\to A$ for various~$n\geqslant0$, that satisfy certain equations. For example, groups, rings, Lie algebras, and many other standard algebraic structures can be encoded in this form. Because of their functorial nature, algebraic theories lend themselves well to extensions into homotopy theory. It seems that this has first been done in Reedy's~1974 thesis on the homology of algebraic theories. Later contributions are~\cite{Schwede} and~\cite{Badzioch}.

Our main result is Theorem~\ref{thm:centers_of_sAlg}. It says that for every (discrete) algebraic theory~$\Theta$, the homotopy coherent center of the category of simplicial~$\Theta$-algebras is homotopically discrete. In fact, the inclusion of the center of the category of discrete~$\Theta$-algebras is an equivalence. 

Our proof of the main theorem involves the calculation of homotopy coherent centers of categories of simplicial presheaves (Theorem~\ref{thm:centers_of_sPre}) as well as of (left Bousfield) localizations of simplicial categories in general (Theorem~\ref{thm:Bousfield_general}) and of some localizations of simplicial presheaves in particular (Theorem~\ref{thm:when_localization_preserves_centers}). These results have other applications as well and should therefore be of independent interest. In the related context of (presentable) quasi-categories, analogous results can be extracted from Lurie's work~\cite{Lurie}, as demonstrated in~\cite{Barwick+Schommer-Pries}. 
 
%%%

Here is an outline of the further contents. The first Section~\ref{sec:foundations} reviews some foundations of categories and higher categories as they are used here. In Section~\ref{sec:centers}, we recall the definition of the homotopy coherent center from~\cite{Szymik}. This is embedded in a discussion of spaces of homotopy coherent natural transformations, and Section~\ref{sec:classes_of_functors} provides some basic tools to manipulate these for various classes of functors. In Section~\ref{sec:presheaves} we discuss homotopy coherent centers for categories of simplicial presheaves, and in Section~\ref{sec:localizations} we do the same for left Bousfield localizations. On both of these pillars rests the final Section~\ref{sec:theories}, where we deal with our main subject of interest, the homotopy coherent centers of categories of (simplicial or homotopy) algebras for algebraic theories. Other applications, in addition to the ones already mentioned, will be spelled out in Sections~\ref{ssec:apps_for_sPre} and~\ref{ssec:apps_for_loc}.

%%%

\section{Homotopy theories}\label{sec:foundations}

The object of this paper is to calculate the homotopy coherent centers of certain large simplicial categories. In this section, we introduce the class of simplicial categories that we are interested in. These are the ones that come from combinatorial simplicial model categories, or what amounts to the same, from presentable quasi-categories. Both of these class are known to consist of localizations of categories of simplicial presheaves. We also explain our convention on largeness and other size issues in these contexts.

\subsection{Categories}

When speaking about categories, it is always possible to ignore the set-theoretic foundations. But, some of our results have implications for the sizes of the sets, categories, and spaces that we consider, and we would therefore rather have a means to address this. That can be done without undue effort by means of universes in the sense of~\cite{SGA4I}. 

The assumption that every set is contained in a universe leads to the existence of three non-empty universes~$\calU\in\calV\in\calW$. Given a choice of these, sets in~$\calU$ will be called {\em small}. Sets of the same cardinality as sets in~$\calU$ will be called {\em essentially small}. Sets in~$\calV$ will be called {\em large}.

A category is {\em (locally) small} if this holds for its set of morphisms (between any two objects), and similarly for large.  A large category is presentable if and and only if it is a suitable localization of a category of presheaves on a small category. When working with large categories, this will have to take place in the gigantic~$\calW$.

%%%

\subsection{Spaces}

Let~$\bfS=\Set_\Delta$ be the category of simplicial sets (or {\em spaces}). We are working with the usual notion of Kan equivalence between spaces. All objects are cofibrant, and the fibrant objects are the Kan complexes. They form the full subcategory~\hbox{$\bfS^\fib=\Set_\Delta^\fib$}. We write~$\Map(X,Y)$ for the space of maps between two spaces~$X$ and~$Y$, and we take care to ensure that~$Y$ is fibrant so that this is homotopically meaningful.

Some of the simplicial sets~$X$ that we construct are large. However, for all these~$X$ there will be a small simplicial set~$X'$ equivalent to~$X$, and the homotopy type of~$X'$ will not depend on the size of the universe in which the construction of the large~$X$ is carried out.

%%%

\subsection{Simplicial categories}

A {\em simplicial category}, for us, is a category~$\bfC$ that is enriched in spaces, and we will write~$\bfC(x,y)$ for the space of maps from the object~$x$ to the object~$y$. 

A simplicial category is {\em locally Kan} (or {\em fibrant}) if the mapping spaces~$\bfC(x,y)$ are Kan complexes for all choices of objects~$x$ and~$y$. The category~$\bfS$ with the usual mapping spaces does not have this property, only if we restrict our attention to Kan complexes. This problem arises for all simplicial model categories, where we have to ensure cofibrancy as well.

On the other hand, it will sometimes be convenient to assume (as in~\cite[II.2]{Goerss+Jardine}) that~$\bfC$ is bicomplete and that both of the functors~$\bfC(x,?)$ and~$\bfC(?,y)$ to spaces have left adjoints, so that~$\bfC$ is also tensored and cotensored with respect to~$\bfS$. These adjoints are usually written~$x\otimes?$ and~$y^?$, so that there are isomorphisms
\[
\bfS(K,\bfC(x,y))\cong\bfC(x\otimes K,y)\cong\bfC(x,y^K)
\]
of spaces for all spaces~$K$. This is the case for simplicial model categories.

Typically, we will be working in a simplicial category~$\bfC$ of the form~$\bfM^\fc$ for some nice simplicial model category~$\bfM$. The details are as follows.

%%%

\subsection{Combinatorial simplicial model categories}

All of the large simplicial categories that we will be dealing with in this paper have additional structure: they are embedded into simplicial model categories in the sense of Quillen~\cite{Quillen:Homotopical}. Recall that a Quillen model category~$\bfM$ is {\em simplicial} if it is also a simplicial category so that axiom SM7 holds: For any cofibration~$j\colon a\to b$ and any fibration~\hbox{$q\colon x\to y$}, the canonical map
\[
\bfM(b,x)\longrightarrow\bfM(a,x)\hspace{-1em}\underset{\displaystyle\bfM(a,y)}{\times}\hspace{-1em}\bfM(b,y)
\]
is a fibration, which is trivial if~$j$ or~$q$ is. If~$\bfM$ is a simplicial Quillen model category, then the full subcategory~$\bfM^\fc$ of fibrant and cofibrant objects is a locally Kan simplicial category.

%%%

For practical purposes, the class of simplicial Quillen model categories is still too big. The subclass of combinatorial model categories has been singled out by Jeff Smith. We follow the presentation in~\cite{Dugger:presentations}: A model category~$\bfM$ is called {\em combinatorial} if it is cofibrantly-generated and the underlying category is (locally) presentable. Dugger has shown that, up to Quillen equivalence, we can assume that these are simplicial as well. 

Combinatorial model categories admit fibrant/cofibrant replacement functors, and we will use these in order push certain constructions which{\it~a priori} land in~$\bfM$ into~$\bfM^\fc$.  

\begin{examples}\label{ex:all_csmc}
If~$\bfC$ is a small category, then the category~$\sPre(\bfC)$ of simplicial presheaves with pointwise equivalences and fibrations is a combinatorial simplicial model category. For every~(small) set~$W$ of maps~$\sPre(\bfC)$, one can form the left Bousfield localization~$\sPre(\bfC)[W^{-1}]$ in which the maps from~$W$ have been added to the equivalences, and this is also a combinatorial simplicial model category. For more information we refer to the later Section~\ref{sec:presheaves} and Section~\ref{sec:localizations}, where we study categories of presheaves and their localizations in more detail.
\end{examples}

Dugger's theorem \cite[1.1,~1.2]{Dugger:presentations} states that up to Quillen equivalence every combinatorial~(simplicial) model category~$\bfM$ is of the form~$\Pre(\bfC)[W^{-1}]$ for suitable~$\bfC$ and~$W$. One can even achieve that all objects are cofibrant, and the model structure is left proper: co-base changes of equivalences along cofibrations are equivalences. 

%%%

\section{Homotopy coherent centers}\label{sec:centers}

In this section we set up some notation and recall the definition of the homotopy coherent center of a simplicial category from~\cite{Szymik}.

%%%

\subsection{Strict natural transformations}

Let~$\bfC$ and~$\bfD$ be simplicial categories. A {\em simplicial functor}~$F\colon\bfC\to\bfD$ is given by a map~$F$ on objects together with simplicial maps~$\bfC(x,y)\to\bfD(Fx,Fy)$ that preserve the identities and the composition. The identity functor on~$\bfC$ will be denoted by~$\Id_\bfC$.

The {\em simplicial natural transformations} between two functors~$F,G\colon\bfC\to\bfD$ form a space~$\Nat(F,G)$, the equalizer of an evident pair of simplicial maps
\[
\prod_x\bfD(Fx,Gx)\Longrightarrow\prod_{y,z}\Map(\bfC(y,z),\bfD(Fy,Gz)),
\]
where the indices~$x$,~$y$,~$z$ run through the objects in~$\bfC$. Equivalently, the space~$\Nat(F,G)$ is the limit of a (similarly evident) cosimplicial space (that we recall in Section~\ref{subsec:N1} below and) that extends the parallel pair displayed above.

The center of an ordinary category is the monoid of all natural transformations from the identity functor to itself. In the simplicial context, we can imitate this definition, and it leads to the notion of the {\em simplicial center}
\[
\Z(\bfC)=\Nat(\Id_\bfC,\Id_\bfC)
\]
of~$\bfC$, see~\cite[Definition~1.1]{Szymik}. This is a simplicial abelian monoid.

%%%

The {\em homotopy category}~$\Ho\bfC$ of a simplicial category~$\bfC$ has the same objects but the mapping spaces are replaced by their sets of components. A morphism~\hbox{$f\colon x\to y$} in a simplicial category~$\bfC$ is called an {\em equivalence} if and only if it represents an isomorphism in the homotopy category. Two objects are {\it equivalent} if they are isomorphic in the homotopy category, i.e.~if and only if there exists a zigzag of equivalences between them.

An {\it equivalence} of simplicial categories is a simplicial functor that is homotopically full and faithful (in the sense of Definition~\ref{def:homotopically_ff} below) and induces an equivalence of homotopy categories. Two simplicial categories are {\it equivalent} if and only there exists a zigzag of equivalences between them. 

As usual in homotopy theory, we are only interested in things up to equivalence, and the following two definitions make this precise in the context of simplicial functors: Two simplicial functors~$F,G\colon\bfC\to\bfD$ are {\em equivalent} if there exists a zigzag of simplicial natural transformation between them which are objectwise equivalences~(in~$\bfD$). 

This reproduces the usual notion of equivalence for simplicial presheaves (see Section~\ref{sec:presheaves}), so that the Yoneda embedding preserves and detects, i.e.~reflects, equivalences. Equivalences of functors will be denoted by the omnipresent `$\simeq$' symbol. As an application of this terminology, we can make the following 

\begin{definition}\label{def:homotopy_adjunction}
A {\em homotopy adjunction} between simplicial categories~$\bfC$ and~$\bfD$ is a pair of simplicial functors~$L\colon\bfC\leftrightarrow\bfD\colon R$ together with an equivalence
\[
\bfD(Lc,d)\simeq\bfC(c,Rd)
\]
of functors~$\bfC^\op\times\bfD\to\bfS$.
\end{definition}

%%%

\subsection{Homotopy coherent natural transformations}\label{subsec:N1}

We will use classical explicit models for spaces of homotopy coherent natural transformations between simplicial functors. See~\cite{Cordier+Porter:Equivariant},~\cite{Cordier+Porter:TAMS} for this and generalizations. Compare also~\cite[Section~2]{Arone+Dwyer+Lesh}, which contains a discussion of spaces of natural transformations from the derived functor perspective in the case when the target category is either the category of spaces of that of spectra.

Let~$F,G\colon\bfC\to\bf\bfD$ be simplicial functors between simplicial categories, and assume that the target~$\bfD$ is locally Kan. For any integer~$n\geqslant0$ we can consider the space
\[
\Pi^n(F,G)=\prod_{x_0,\dots,x_n}
\Map( \bfC(x_1,x_0)\times\dots\times\bfC(x_n,x_{n-1}) , \bfD(Fx_n,Gx_0) )
\]
where the product runs over the~$(n+1)$-tuples of objects of~$\bfC$. Together with the evident structure maps, this defines a cosimplicial space~$\Pi^\bullet(F,G)$. The {\em space of homotopy coherent natural transformations}
\begin{equation}\label{eq:explicit_model}
\hoNat(F,G)=\Tot\Pi^\bullet(F,G)
\end{equation}
is defined as the totalization of the cosimplicial space~$\Pi^\bullet(F,G)$.

If the target $\bfD$ is not necessarily locally Kan, then we can choose a fibrant replacement~\hbox{$r\colon\bfD\to\bfD'$} of it, and we can set
\[
\hoNat(F,G)=\hoNat(rF,rG)=\Tot\Pi^\bullet(rF,rG).
\]
This does not depend on the choice of $r$ up to a contractible choice of equivalence.

The following is immediate from the fact that a level-wise equivalence between fibrant cosimplicial spaces (in the sense of Bousfield and Kan) induces an equivalence between their totalizations.

\begin{proposition}
All natural equivalences~$F\to F'$ and~$G\to G'$ between simplicial functors~\hbox{$F,F',G,G'\colon\bfC\to\bfD$} induce equivalences~$\hoNat(F',G)\to\hoNat(F,G)$ and~\hbox{$\hoNat(F,G)\to\hoNat(F,G')$} between the spaces of homotopy coherent natural transformations.
\end{proposition}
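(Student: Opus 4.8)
The plan is to reduce the statement to the well-known fact quoted just before it, namely that a level-wise equivalence between fibrant cosimplicial spaces induces an equivalence on totalizations. By the definition~\eqref{eq:explicit_model}, the spaces $\hoNat(F,G)$ and $\hoNat(F',G)$ are the totalizations of the cosimplicial spaces $\Pi^\bullet(F,G)$ and $\Pi^\bullet(F',G)$ respectively (after, if necessary, replacing $\bfD$ by a fibrant model $\bfD'$, which I may do at the outset and which only changes everything up to a contractible choice). So the first step is to produce, for every $n$, a map $\Pi^n(F',G)\to\Pi^n(F,G)$ that is compatible with the cosimplicial structure maps, and then to check two things: that each such map is an equivalence of spaces, and that both cosimplicial spaces are fibrant in the Bousfield--Kan sense so that the cited totalization principle applies.

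The map on each cosimplicial level is induced by a natural equivalence $\theta\colon F\to F'$ in the evident way. Since $\Pi^n(F,G)$ is a product over $(n+1)$-tuples $(x_0,\dots,x_n)$ of the mapping spaces $\Map\bigl(\bfC(x_1,x_0)\times\dots\times\bfC(x_n,x_{n-1}),\bfD(Fx_n,Gx_0)\bigr)$, I would use that postcomposition and precomposition with the equivalence $\theta_{x_n}\colon Fx_n\to F'x_n$ induces $\bfD(F'x_n,Gx_0)\to\bfD(Fx_n,Gx_0)$, an equivalence because $\bfD$ is locally Kan (so mapping spaces are Kan complexes) and composition with an equivalence in a locally Kan simplicial category is a weak equivalence of Kan complexes. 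Applying $\Map(\bfC(x_1,x_0)\times\dots\times\bfC(x_n,x_{n-1}),-)$ to this equivalence of Kan complexes again yields an equivalence, and taking the product over all tuples preserves equivalences; hence $\Pi^n(F',G)\to\Pi^n(F,G)$ is a level-wise equivalence. Strict naturality of $\theta$ ensures these maps commute with the cofaces and codegeneracies, so they assemble into a map of cosimplicial spaces. The $G\to G'$ case is entirely parallel, using precomposition $\bfD(Fx_n,G'x_0)\to\bfD(Fx_n,Gx_0)$ is the wrong variance --- one instead gets $\bfD(Fx_n,Gx_0)\to\bfD(Fx_n,G'x_0)$, still an equivalence --- yielding the stated direction $\hoNat(F,G)\to\hoNat(F,G')$.

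The main obstacle, and the only point requiring care, is the fibrancy of the cosimplicial spaces $\Pi^\bullet(F,G)$ in the Reedy/Bousfield--Kan sense, since the totalization invariance result is quoted only for fibrant cosimplicial spaces. Here I would invoke the SM7-type behavior of the simplicial mapping spaces: because $\bfD$ is locally Kan, each $\bfD(Fx_n,Gx_0)$ is a Kan complex, each $\Map(-,\bfD(Fx_n,Gx_0))$ out of an arbitrary space is therefore a Kan complex, and the matching maps in the cosimplicial direction --- which are built from restrictions along the inclusions of degenerate simplices in the products $\bfC(x_1,x_0)\times\dots\times\bfC(x_n,x_{n-1})$ --- are fibrations because $\Map(-,K)$ sends cofibrations (inclusions of simplicial sets) to fibrations when $K$ is Kan. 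Thus $\Pi^\bullet(F,G)$ is Reedy fibrant, and the same for the primed versions, so the cited principle applies and the induced maps on totalizations are equivalences. This completes the argument; I would keep it to a few lines, citing~\cite{Goerss+Jardine} for the Reedy fibrancy of the relevant cosimplicial mapping-space construction if a reference is wanted.
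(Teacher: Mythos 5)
Your proof is correct and follows exactly the strategy the paper indicates: the authors dispatch this proposition in a single sentence as ``immediate'' from the quoted fact that a levelwise equivalence between fibrant cosimplicial spaces induces an equivalence of totalizations, and your argument is precisely the unpacking of that claim --- pre/postcomposition with the components of the natural equivalence gives the levelwise map, local Kan-ness of $\bfD$ makes each level an equivalence (composition with an equivalence in a locally Kan simplicial category is a homotopy equivalence of Kan mapping spaces, and products and $\Map(X,-)$ of Kan complexes preserve this), and Reedy fibrancy of $\Pi^\bullet$ follows from SM7 applied to the latching inclusions of the bar construction. You have simply made explicit what the paper leaves tacit.
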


%%%

\subsection{Homotopy coherent centers}\label{subsec:N2}

Let $\bfC$ be locally Kan simplicial category. Specializing the consideration of the previous Section~\ref{subsec:N1} to the case~\hbox{$F=\Id_\bfC=G$}, this defines the {\em homotopy coherent center} of~$\bfC$ as
\begin{equation}\label{def:Z}
\hoZ(\bfC)=\hoNat(\Id_\bfC,\Id_\bfC),
\end{equation}
see~\cite[Definition~2.3]{Szymik}. 

To extend this definition to general simplicial categories $\bfC$, we can choose a fibrant replacement~\hbox{$r\colon\bfC\to\bfC'$}. For example, keeping the same objects, the function complexes in $\bfC'$ can be taken to be the singular complexes on the geometric realizations of the function complexes in $\bfC$. The previous definitions result in
\[
\hoZ(\bfC)=\hoNat(\Id_\bfC,\Id_\bfC)=\hoNat(r,r),
\]
and by direct inspection of the definitions (compare Proposition~\ref{prop:homotopically_dense}) this is also equivalent to $\hoNat(\Id_{\bfC'},\Id_{\bfC'})=\hoZ(\bfC')$. 

Equivalent locally Kan simplicial categories have equivalent homotopy coherent centers, see~\cite[Theorem~4.1]{Szymik}.

%%%

\subsection{The case of Quillen model categories}\label{subsec:bar}

In the situation most relevant to us, the locally Kan simplicial category~$\bfD$ embeds as~$\bfM^\fc$ into some nice simplicial model category~$\bfM$. We can then give a more conceptual definition of the spaces of homotopy coherent natural transformation into~$\bfM^\fc$, using the additional structure present in the ambient category~$\bfM$.

Let~$F\colon\bfC\to\bf\bfM^\fc$ be a simplicial functor between simplicial categories, where~$\bfM$ is tensored over~$\bfS$, and let~\hbox{$W\colon\bfC^\op\times\bfC'\to\bfS$} be another simplicial functor. It plays the role of a `weight bimodule.' Typical examples will be~$W(x,y)=\bfC(x,y)$ for~\hbox{$\bfC'=\bfC$}, or more generally~$W(x,y)=\bfC'(Ex,y)$ for a functor~\hbox{$E\colon\bfC\to\bfC'$}. These will be denoted by~$\bfC_{\Id}$ and~$\bfC'_E$, respectively. 

Recall that the {\em bar construction}~$\rmB_\bullet(W,\bfC,F)$ is a simplicial object in the category of simplicial functors~$\bfC\to\bfM^\fc$ with
\[
\rmB_n(W,\bfC,F)=\sum_{x_0,\dots,x_n}\left(W(x_0,?)\times\bfC(x_1,x_0)\times\dots\times\bfC(x_n,x_{n-1})\right)\otimes F(x_n).
\]
Clearly, this also requires the existence of enough colimits. The tensor (or coend)~$W\otimes_\bfC F$ is the colimit of~$\rmB_\bullet(W,\bfC,F)$. It can also be written as the coequalizer of the first two face maps. In general, the colimit of a simplicial space~$X_\bullet$ is~$\star\otimes_\Delta X_\bullet$, whereas~$\Delta^\bullet\otimes_\Delta X_\bullet$ is its geometric realization. In particular, the geometric realization~$\rmB(W,\bfC,F)$ of the bar construction is~$\Delta^\bullet\otimes_\Delta\rmB_\bullet(W,\bfC,F)$. Useful references for the bar construction calculus in enriched contexts are~\cite{Hollender+Vogt} and~\cite{Meyer}.

Given a functor~$F$, the bar construction~$\rmB_\bullet(\bfC_{\Id},\bfC,F)$ is a kind of  simplicial resolution of it, and given another simplicial functor~$G\colon\bfC\to\bfM^\fc$, this gives rise to the cosimplicial space~$\Nat(\rmB_\bullet(\bfC_{\Id},\bfC,F),G)$ with level~$n$ isomorphic with
\begin{equation}\label{eq:without}
\prod_{x_0,\dots,x_n}\Map(\bfC(x_1,x_0)\times\dots\times\bfC(x_n,x_{n-1}),\bfM(F(x_n),G(x_0)))
\end{equation}
by the Yoneda lemma. Its totalization
\begin{equation}
\hoNat(F,G)=\Tot(\Nat(\rmB_\bullet(\bfC_{\Id},\bfC,F),G))
\end{equation}
is the explicit model~\eqref{eq:explicit_model}.

It is clear from~\eqref{eq:explicit_model} that~$\hoNat(F,G)$ is already meaningful if~$F$ and~$G$ take values in cofibrant and fibrant objects, respectively. This flexibility will come in handy later.

If~$\bfM$ is a simplicial model category, one might be tempted to consider it as a simplicial category, thereby forgetting about the Quillen model structure. Then the space
\[
\hoZ(\bfM)
\]
can be computed as the homotopy coherent center of an equivalent locally Kan simplicial category. However, this is not the right thing to do. Looking at~$\bfM$~(even up to equivalence of simplicial categories) involves ignoring the homotopy theory~(the weak equivalences) of~$\bfM$. To take this structure into account, we will have to pass to a (locally Kan) simplicial category that codifies the homotopy theory of~$\bfM$. A canonical choice is the simplicial category~$\bfM^\fc$ of fibrant/cofibrant objects in~$\bfM$. Another one would be (a locally Kan replacement of) the simplicial localization~$\rmL(\bfM)$ of~$\bfM$, see~\cite{Dwyer+Kan:Simplicial}, \cite{Dwyer+Kan:Calculating}, and~\cite{Dwyer+Kan:Function}. Summing up, this justifies to consider
\[
\hoZ(\bfM^\fc)
\]
as a suitable model for the homotopy coherent center of the homotopy theory defined by a simplicial Quillen model category~$\bfM$.

Note that~$\hoZ(\bfM^\fc)\simeq\hoZ(\bfN^\fc)$ if~$\bfM$ and~$\bfN$ are Quillen equivalent, since~$\bfM^\fc$ and~$\bfN^\fc$ are equivalent as simplicial categories in this case.

%%%

\subsection{A digression on \texorpdfstring{$\infty$}{infinity}-categories}

Besides simplicial categories, another popular model for $\infty$-categories are quasi-categories. These were known to Boardman and Vogt~\cite{Boardman+Vogt} under the name {\it weak Kan complexes}. They have been developed as a model for~$\infty$-categories by Joyal~\cite{Joyal} and Lurie~\cite{Lurie}. It is a very economic model in the sense that the category of quasi-categories sits inside the category of simplicial sets as the fibrant and cofibrant objects for Joyal's model category structure; one might say that quasi-categories {\em are} their nerves.

Cordier's simplicial (or coherent) nerve construction~$\rmN_\Delta$ for simplicial categories~\cite{Cordier} extends the usual nerve~$\rmN$ for ordinary categories, and it is part of a Quillen equivalence between simplicial categories (with the model structure of Bergner~\cite{Bergner}) and Joyal's model category~\cite[2.2.5]{Lurie}. In particular, if~$\bfC$ is locally Kan, then~$\rmN_\Delta(\bfC)$ is a quasi-category. This holds, in particular, if~$\bfC=\bfM^\fc$ for some simplicial model category~$\bfM$. While this suggests, that quasi-categories are more general than simplicial categories, this is not the case. Lurie has shown that presentable quasi-categories are, up to equivalence, all of the form~$\rmN_\Delta(\bfM^\fc)$ for combinatorial simplicial model categories~$\bfM$. His proof uses Dugger's theorem, see~\cite[A.3.7.6]{Lurie}. In other words, the homotopy theories most commonly considered (both in the context of combinatorial model categories and presentable quasi-categories) are all equivalent to localizations of presheaf categories. This justifies explains our {\it modus procedendi} for the computation of homotopy coherent centers.

%%%

In the context of a quasi-category~$\bfQ$, the homotopy coherent center of~$\bfQ$ is the endomorphism space of the identity functor in the quasi-category of endofunctors~\hbox{$\bfQ\to\bfQ$}. This agrees with our definition for locally Kan simplicial categories under the cited Quillen equivalence.

%%%

\section{Some distinguished classes of functors}\label{sec:classes_of_functors}

In this section we study the behavior of spaces of homotopy coherent natural transformations with respect to three distinguished classes of functors: Homotopically full and faithful functors, homotopy left Kan extensions, and homotopically dense functors. The ensuing calculus for spaces of spaces of homotopy coherent natural transformations will later enable us to determine the homotopy coherent centers for the categories that we are interested in.

\subsection{Homotopically full and faithful functors}

Recall that a simplicial functor~$F\colon\bfD\to\bfT$ is called {\em full and faithful}, if for all~$x$ and~$y$ the induced maps~$\bfD(x,y)\to\bfT(Fx,Fy)$ are isomorphisms. Given simplicial functors~\hbox{$F\colon\bfD\to\bfT$} and~$G,H\colon\bfC\to\bfD$, the induced map
\[
\Nat(G,H)\longrightarrow\Nat(FG,FH)
\]
is an isomorphism whenever~$F$ is full and faithful. Here is the analog of that notion that is appropriate in homotopical contexts.

\begin{definition}\label{def:homotopically_ff}
A simplicial functor~$F\colon\bfD\to\bfT$ is called {\em homotopically full and faithful}, if the induced maps~$\bfD(x,y)\to\bfT(Fx,Fy)$ are equivalences for all~$x$ and~$y$. 
\end{definition}

In the homotopical situation this is not obviously the compositum of a~`full' and a~`faithful' part. 

\begin{proposition}\label{prop:ff_implies_hff}
Full and faithful functors are also homotopically full and faithful.
\end{proposition}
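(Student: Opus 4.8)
The plan is to reduce the statement to the elementary observation that an isomorphism of simplicial sets is a Kan equivalence. Unwinding the two definitions: a simplicial functor $F\colon\bfD\to\bfT$ is full and faithful exactly when each structure map $\bfD(x,y)\to\bfT(Fx,Fy)$ is an isomorphism in $\bfS$, and it is homotopically full and faithful exactly when each of these same maps is an equivalence. So the only thing that needs checking is the implication ``isomorphism $\Rightarrow$ Kan equivalence'' for a morphism $f\colon X\to Y$ in $\bfS$.

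This step is immediate: an isomorphism $f$ comes with a two-sided inverse $f^{-1}$, so $f$ is in particular a simplicial homotopy equivalence, and hence a Kan equivalence. (Alternatively, $f$ induces bijections on all simplicial homotopy groups at all basepoints, or a homeomorphism on geometric realizations.) Applying this to each pair of objects $x,y$ of $\bfD$, a full and faithful $F$ has all of the maps $\bfD(x,y)\to\bfT(Fx,Fy)$ equivalences, which is precisely the condition that $F$ be homotopically full and faithful. Note that this argument is insensitive to whether or not $\bfD$ and $\bfT$ are locally Kan, since being a Kan equivalence is a property of a map of arbitrary simplicial sets.

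I expect no genuine obstacle here: the whole content is recognizing that the shape of the two definitions collapses the claim to the above triviality. It is worth recording separately only because, as the preceding remark points out, the homotopical notion does not visibly split into a ``full'' part and a ``faithful'' part, so one cannot invoke the classical implication one component at a time, but must instead argue with the single mapping-space condition directly.
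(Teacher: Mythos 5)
Your argument is correct and is the evident one: the paper states this proposition without proof precisely because, as you observe, the claim reduces immediately to the fact that an isomorphism of simplicial sets is a Kan equivalence, applied to each structure map $\bfD(x,y)\to\bfT(Fx,Fy)$.
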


We record the following easy observation that will be helpful in our calculations.

\begin{proposition}{\upshape\bf(Cancellation of homotopically full and faithful functors)}\label{prop:cancellation}
Let~$\bfC$,~$\bfD$, and~$\bfT$ be simplicial categories with~$\bfD$ and~$\bfT$ locally Kan.
Given simplicial functors~\hbox{$F\colon\bfD\to\bfT$} and~$G,H\colon\bfC\to\bfD$ such that~$F$ is homotopically full and faithful, the induced map
\[
\hoNat(G,H)\longrightarrow\hoNat(FG,FH)
\]
of spaces is an equivalence.
\end{proposition}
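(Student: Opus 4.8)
The plan is to prove the equivalence levelwise on the cosimplicial spaces whose totalizations compute the two sides, using the explicit model~\eqref{eq:explicit_model}. First I would recall that, since $\bfD$ and $\bfT$ are locally Kan, we have $\hoNat(G,H)=\Tot\Pi^\bullet(G,H)$ and $\hoNat(FG,FH)=\Tot\Pi^\bullet(FG,FH)$ on the nose, with no fibrant replacement needed. The map in question is induced by the map of cosimplicial spaces $\Pi^\bullet(G,H)\to\Pi^\bullet(FG,FH)$ that post-composes with $F$. Since totalization takes levelwise equivalences between (Reedy-)fibrant cosimplicial spaces to equivalences — exactly the principle invoked just before the earlier Proposition on naturality — it suffices to check two things: that $\Pi^\bullet(G,H)\to\Pi^\bullet(FG,FH)$ is a levelwise equivalence, and that both cosimplicial spaces are fibrant.

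For the levelwise statement, fix $n\geqslant 0$. The map on level $n$ is the product over $(n+1)$-tuples $(x_0,\dots,x_n)$ of objects of $\bfC$ of the maps
\[
\Map\bigl(\bfC(x_1,x_0)\times\dots\times\bfC(x_n,x_{n-1}),\,\bfD(Gx_n,Hx_0)\bigr)
\longrightarrow
\Map\bigl(\bfC(x_1,x_0)\times\dots\times\bfC(x_n,x_{n-1}),\,\bfT(FGx_n,FHx_0)\bigr),
\]
induced by the map $\bfD(Gx_n,Hx_0)\to\bfT(FGx_n,FHx_0)$ coming from $F$. This latter map is an equivalence because $F$ is homotopically full and faithful (Definition~\ref{def:homotopically_ff}). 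Since $\Map(K,-)$ preserves equivalences between Kan complexes and $\bfD$, $\bfT$ are locally Kan, the displayed map is an equivalence, and a product of equivalences is an equivalence. The fibrancy of the two cosimplicial spaces is the other ingredient: it is the same Reedy-fibrancy fact that underlies the explicit model~\eqref{eq:explicit_model} being homotopically meaningful in the first place — the matching maps are built from the fibrations $\Map(K,\bfD(Gx_n,Hx_0))\to\Map(L,\bfD(Gx_n,Hx_0))$ associated to cofibrations $L\hookrightarrow K$ of the relevant products of mapping spaces, using that $\bfD$ (resp.\ $\bfT$) is locally Kan — so I would simply cite this alongside the statement quoted before the earlier naturality Proposition.

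The main obstacle, such as it is, is bookkeeping rather than mathematics: one must be slightly careful that $FG$ and $FH$ really are the composites appearing in $\Pi^\bullet(FG,FH)$, i.e.\ that $(FG)x_n=F(Gx_n)$ and the structure maps of $\Pi^\bullet(FG,FH)$ are compatible with post-composition by $F$ applied to $\Pi^\bullet(G,H)$; this is immediate from functoriality of $F$ but should be mentioned so the map of cosimplicial spaces is unambiguous. A secondary point worth a sentence is that the hypotheses do not require $\bfC$ to be locally Kan, which is why we phrase everything at the level of the $\Pi^n$ and do not need to replace $\bfC$. With the levelwise equivalence and Reedy fibrancy in hand, applying $\Tot$ finishes the proof.
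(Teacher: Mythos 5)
Your proof is correct and follows the same route as the paper: both argue that homotopical full-and-faithfulness of $F$ gives equivalences $\bfD(Gx,Hy)\to\bfT(FGx,FHy)$, hence levelwise equivalences $\Pi^n(G,H)\to\Pi^n(FG,FH)$, and then pass to totalizations. Your extra remarks about Reedy fibrancy and the compatibility of structure maps make explicit the justification that the paper leaves implicit, but the argument is the same.
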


\begin{proof}
By hypothesis, the simplicial functor~$F$ induces natural equivalences
\[
\bfD(Gx,Hy)\overset{\simeq}{\longrightarrow}\bfT(FGx,FHy)
\]
for all objects~$x$ and~$y$. These induce equivalences
\[
\Pi^n(G,H)\overset{\simeq}{\longrightarrow}\Pi^n(FG,FH)
\]
between the levels of the cosimplicial spaces, and therefore an equivalence between the totalizations of these. 
\end{proof}

Taking~$G=\Id=H$ gives the following means for calculating homotopy coherent centers.

\begin{corollary}\label{cor:full_and_faithful_and_centers}
For all homotopically full and faithful functors~$F\colon\bfC\to\bfD$ between simplicial categories that are locally Kan, there exists an equivalence
\[
\hoZ(\bfC)\simeq\hoNat(F,F).
\]
\end{corollary}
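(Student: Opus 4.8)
The statement to prove is Corollary~\ref{cor:full_and_faithful_and_centers}: for a homotopically full and faithful functor $F\colon\bfC\to\bfD$ between locally Kan simplicial categories, there is an equivalence $\hoZ(\bfC)\simeq\hoNat(F,F)$. The plan is to deduce this directly from Proposition~\ref{prop:cancellation} (Cancellation of homotopically full and faithful functors) by the obvious specialization, together with the defining identity $\hoZ(\bfC)=\hoNat(\Id_\bfC,\Id_\bfC)$ from~\eqref{def:Z}.

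First I would set up the instance of Proposition~\ref{prop:cancellation} with $\bfT$ replaced by $\bfD$ and with both of the functors $G,H\colon\bfC\to\bfC$ taken to be the identity $\Id_\bfC$; here the ``inner'' category is $\bfC$ itself and the ``outer'' functor is $F\colon\bfC\to\bfD$, which is homotopically full and faithful by hypothesis. Since $\bfC$ and $\bfD$ are both assumed locally Kan, the hypotheses of Proposition~\ref{prop:cancellation} are satisfied (the roles of the locally Kan categories $\bfD$ and $\bfT$ in that proposition are played here by $\bfC$ and $\bfD$). Its conclusion then yields an equivalence
\[
\hoNat(\Id_\bfC,\Id_\bfC)\longrightarrow\hoNat(F\Id_\bfC,F\Id_\bfC).
\]
The left-hand side is by definition $\hoZ(\bfC)$, and the composites $F\Id_\bfC$ on the right are literally equal to $F$, so the right-hand side is $\hoNat(F,F)$. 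This is exactly the claimed equivalence.

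There is essentially no obstacle; the only thing that requires a word is the bookkeeping that $F\circ\Id_\bfC = F$ as simplicial functors (so that no fibrant-replacement juggling is needed) and that the locally Kan hypotheses line up correctly with those of Proposition~\ref{prop:cancellation}. If one wanted to be scrupulous about the case where $\bfC$ is not literally locally Kan, one would invoke the extension of $\hoZ$ and $\hoNat$ to arbitrary simplicial categories via a fibrant replacement, as discussed in Section~\ref{subsec:N2}, but under the stated hypothesis that $\bfC$ is locally Kan this is unnecessary. I expect the proof to be two or three lines: cite Proposition~\ref{prop:cancellation} with $G=\Id_\bfC=H$, and rewrite $\hoNat(\Id_\bfC,\Id_\bfC)$ as $\hoZ(\bfC)$ using~\eqref{def:Z}.
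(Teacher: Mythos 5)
Your proposal is correct and is precisely the paper's own argument: the corollary is obtained by specializing Proposition~\ref{prop:cancellation} to $G=H=\Id_\bfC$ and rewriting $\hoNat(\Id_\bfC,\Id_\bfC)$ as $\hoZ(\bfC)$ via~\eqref{def:Z}. Your bookkeeping remarks about $F\circ\Id_\bfC = F$ and the locally Kan hypotheses are accurate and match what the paper leaves implicit.
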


%%%

\subsection{Homotopy left Kan extensions}

We consider a diagram
\begin{equation}\label{eq:Kan_diagram}
\xymatrix{
\bfC\ar[d]_-F\ar[r]^-Q&\bfT\\
\bfD
}
\end{equation}
of simplicial functors. Recall, for example from~\cite[Chapter~4]{Kelly}, that a {\em simplicial left Kan extension}~$\LKan_FQ$ is a simplicial functor~\hbox{$\bfD\to\bfT$} together with a natural isomorphism
\[
\Nat(\LKan_FQ,R)\cong\Nat(Q,F^*R)
\]
of spaces of simplicial natural transformations. This can be rephrased to say that the functor~$\LKan_F\colon\bfT^\bfC\to\bfT^\bfD$ between functor categories is left adjoint (in the simplicial sense) to the restriction functor~$F^*\colon\bfT^\bfD\to\bfT^\bfC$ with~$F^*R=RF$. 

We will now pass to the homotopy coherent version of the left Kan extension, and we will therefore assume that~$\bfT=\bfM^\fc$ for some simplicial model category~$\bfM$. Then homotopy colimits and more generally realizations of simplicial objects in~$\bfT$ are available, since these can be constructed in~$\bfM$ and then transferred to~$\bfT$ by fibrant/cofibrant replacement.

Using the functor~$F$ to define the weight bimodule~$\bfD_F$ with~$\bfD_F(c)=\bfD(Fc,?)$ as in Section~\ref{subsec:bar}, the simplicial left Kan extension can be defined (compare~\cite[Section~4.1]{Kelly}) as a tensor~(or coend)
\[
(\LKan_FQ)(d)=\bfD_F\otimes_\bfC Q.
\]
Consequently, it should not be a surprise that homotopy left Kan extensions can be defined by using the corresponding homotopy coend, which is the bar construction:
\[
(\hoLKan_FQ)(d)=\rmB(\bfD_F, \bfC, Q).
\]
See~\cite{Cordier+Porter:TAMS}, for example. Similar to Proposition~6.1 of~{\it loc.cit.}, we have

\begin{proposition}{\upshape\bf(Universal property of homotopy left Kan extensions)}\label{prop:Kan_extensions}
In the situation~\eqref{eq:Kan_diagram}, there exists an equivalence
\[
\hoNat(\hoLKan_FQ,R)\simeq\hoNat(Q,F^*R)
\]
of spaces of homotopy coherent natural transformations.
\end{proposition}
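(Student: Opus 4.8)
The plan is to reduce the statement to a cosimplicial-level computation, mirroring the proof of Proposition~\ref{prop:cancellation}. First I would unwind both sides using the explicit models. By the defining formula for the bar construction and the fact that~$\bfM$ is tensored over~$\bfS$, the functor~$(\hoLKan_FQ)(d)=\rmB(\bfD_F,\bfC,Q)(d)$ is the geometric realization of a simplicial object whose~$n$-th level is a coproduct of tensors~$\bigl(\bfD(Fx_0,d)\times\bfC(x_1,x_0)\times\dots\times\bfC(x_n,x_{n-1})\bigr)\otimes Q(x_n)$. Plugging this into the cosimplicial space~$\Nat(\rmB_\bullet(\bfD_{\Id},\bfD,\hoLKan_FQ),R)$ from Section~\ref{subsec:bar} that computes~$\hoNat(\hoLKan_FQ,R)$, and repeatedly invoking the Yoneda lemma together with the adjunction~$\bfS(K,\bfM(x\otimes L,y))\cong\bfS(K\times L,\bfM(x,y))$, collapses the~$\bfD$-mapping spaces and the~$\bfD_F$-factor: the~$\bfD(Fx_0,d)$ appearing in the weight gets absorbed, leaving products of~$\bfC$-mapping spaces mapping into~$\bfM(Q(x_n),R(\text{something}))$. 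The upshot is a bicosimplicial (or rather a diagonal of a double) space whose totalization I must identify with~$\hoNat(Q,F^*R)=\Tot\Pi^\bullet(Q,F^*R)$.

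The key steps, in order, are: (1) write out~$\hoNat(\hoLKan_FQ,R)$ as the totalization of~$\Nat(\rmB_\bullet(\bfD_{\Id},\bfD,\hoLKan_FQ),R)$; (2) substitute the bar-construction formula for~$\hoLKan_FQ$ and use that~$\Nat(-,R)$ turns colimits (coproducts, tensors, realizations) in the source into the corresponding limits, so that~$\Nat$ out of a tensor~$X\otimes Q(x_n)$ becomes~$\Map(X,\bfM(Q(x_n),-))$; (3) apply the Yoneda lemma in the~$\bfD$-variable to contract the chain of~$\bfD$-mapping spaces against the representable weight~$\bfD(F-,-)$, exactly as one would show that the bar resolution~$\rmB_\bullet(\bfD_{\Id},\bfD,\bfD_F\otimes_\bfC-)$ is, after~$\Nat(-,R)$, equivalent to a resolution built from~$\bfC$ alone; (4) recognize the resulting cosimplicial space as~$\Pi^\bullet(Q,F^*R)$, using~$F^*R=R\circ F$ so that~$\bfM(Q(x_n),RFx_0)$ is precisely the target appearing in~$\Pi^n(Q,F^*R)$; (5) conclude that the two totalizations agree, noting that all the spaces in sight are fibrant because~$\bfM^\fc$ is locally Kan and~$R$ lands in fibrant objects, so levelwise equivalences of (Reedy-)fibrant cosimplicial spaces induce equivalences on~$\Tot$, as already used in the excerpt.

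Two bookkeeping points deserve care. One must check that the extra simplicial direction coming from the realization~$\rmB(\bfD_F,\bfC,Q)$ interacts correctly with the cosimplicial direction of~$\Nat(\rmB_\bullet(\bfD_{\Id},\bfD,-),R)$; the cleanest route is to keep everything as a double (co)simplicial space, observe that~$\Nat(-,R)$ out of a realization is the~$\Tot$ over the internal simplicial direction, and then use a Fubini-type interchange (totalization of a bicosimplicial space computed in either order) together with the fact that, in the internal direction, the bar resolution is an augmented resolution that~$\Nat(-,R)$ sends to something with a contraction — so that internal direction contributes nothing and one is left with~$\Pi^\bullet(Q,F^*R)$. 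The other point is cofibrancy/fibrancy: I should remark that~$Q$ may be assumed to take cofibrant values (or that the bar construction is built in~$\bfM$ and then pushed into~$\bfM^\fc$, as flagged after~\eqref{eq:without}), so that the tensors~$X\otimes Q(x_n)$ are homotopically meaningful and~$\rmB(\bfD_F,\bfC,Q)$ genuinely lands in (a functor with cofibrant-enough values in)~$\bfM^\fc$.

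The main obstacle is step~(3)–(4): making the Yoneda contraction in the~$\bfD$-direction rigorous in the enriched, bar-resolved setting, i.e.\ showing that after applying~$\Nat(\rmB_\bullet(\bfD_{\Id},\bfD,-),R)$ the representable weight~$\bfD_F$ can be slid through so that the~$\bfD$-bar direction cancels and only the~$\bfC$-bar direction survives. This is exactly the enriched analog of the classical fact that~$\LKan_F$ is adjoint to~$F^*$ (Section~\ref{subsec:bar}), now at the homotopy-coherent level; the content is that the bar construction computes the derived left Kan extension and that the derived adjunction persists. I expect to handle it by the interchange argument above rather than by an explicit simplex-by-simplex manipulation, citing~\cite{Cordier+Porter:TAMS} (their Proposition~6.1) for the precise form of the identification and~\cite{Hollender+Vogt},~\cite{Meyer} for the bar-construction calculus that legitimizes the Fubini step.
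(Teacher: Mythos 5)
Your proposal is correct and follows essentially the same route as the paper: both proceed by expanding the two sides as totalizations of cosimplicial spaces built from bar constructions, use the associativity/Fubini property of the bar construction (plus the Yoneda lemma and the strict adjunction between $\LKan_F$ and $F^*$) to reduce each to a common form, and then invoke the simplicial contraction of the bar resolution on the weight bimodule (your ``Yoneda contraction in the $\bfD$-direction'' is the paper's equivalence $\rmB(\bfD_{\Id},\bfD,\bfD_F)\simeq\bfD_F$). The paper's presentation is a bit tidier in that it manipulates the two sides symmetrically and meets at the intermediate object $\Tot\Nat(\rmB_\bullet(\bfD_F,\bfC,Q),R)$, whereas you push through the left side directly to $\Pi^\bullet(Q,F^*R)$, but this is a cosmetic rather than a mathematical difference.
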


Before we give a proof, let us mention that, for the spaces a homotopy coherent natural transformations to be meaningful, we should assume that~$R\colon\bfC\to\bfM$ lands into~$\bfM^\fc$. Since we already assume that~$Q$ does so, it follows that~$\hoLKan_FQ$ takes values in cofibrant objects, and this is enough to serve our purposes.

\begin{proof}
On the one hand, we have
\begin{align*}
\hoNat(\hoLKan_FQ,R)
&\cong\Tot\Nat(\rmB_\bullet(\bfD_{\Id},\bfD,\hoLKan_FQ),R)\\
&\cong\Tot\Nat(\rmB_\bullet(\bfD_{\Id},\bfD,\rmB(\bfD_F,\bfC,Q)),R)\\
&\cong\Tot\Nat(\rmB_\bullet(\rmB(\bfD_{\Id},\bfD,\bfD_F),\bfC,Q),R),
\end{align*}
while on the other hand, we have
\begin{align*}
\hoNat(Q,F^*R) 
&\cong\Tot\Nat(\rmB_\bullet(\bfC_{\Id},\bfC,Q),F^*R)\\
&\cong\Tot\Nat(\LKan_F\rmB_\bullet(\bfC_{\Id},\bfC,Q),R)\\
&\cong\Tot\Nat(\bfD_F\otimes_\bfC\rmB_\bullet(\bfC_{\Id},\bfC,Q),R)\\
&\cong\Tot\Nat(\rmB_\bullet(\bfD_F,\bfC,Q),R),
\end{align*}
and there exists an equivalence between the two of them induced by the equivalence
\[
\rmB(\bfD_{\Id},\bfD,\bfD_F)\overset{\sim}{\longrightarrow}\bfD_F
\]
of functors.
\end{proof}

%%%

\subsection{Homotopically dense functors}\label{ssec:dense}

Recall, for example from \cite[5.1]{Kelly}, that a simplicial functor~$F\colon\bfC\to\bfD$ is called {\em dense} if there exists an isomorphism
\[
\Id_\bfD\cong\LKan_FF
\]
as functors~$\bfD\to\bfD$. There exists a useful characterization of dense functors~$F\colon\bfC\to\bfD$ as the functors such that the associated functor
\begin{equation}\label{eq:tilde}
\widetilde F\colon\bfD\longrightarrow\sPre(\bfC),\,d\longmapsto\bfD(F?,d)
\end{equation}
is both full and faithful, see~\cite[Theorem~5.1]{Kelly}. Here, the target category is the category~$\sPre(\bfC)$ of simplicial presheaves on~$\bfC$. (See Section~\ref{sec:presheaves} for more on presheaves.) This associated functor is the composition~$\sPre(F)\circ\rmY_\bfD$, where the functor~\hbox{$\sPre(F)\colon\sPre(\bfD)\to\sPre(\bfC)$} is induced by the functor~$F$, and~$\rmY_\bfD$ is the Yoneda embedding of~$\bfD$.  It can also be described as adjoint to the weight bimodule~$\bfD_F$.

In the homotopy coherent setting, we proceed as follows. Let~$F\colon\bfC\to\bfM^\fc$
be a simplicial functor, and denote by~$F'$ the composition of~$F$ with the inclusion~\hbox{$\bfM^\fc\to\bfM$}. We can then compute~$\hoLKan_FF'\colon\bfM^\fc\to\bfM$ as above by realizing a bar construction, and then~$\hoLKan_FF$ is the composition of a fibrant/cofibrant replacement functor~$\bfM\to\bfM^\fc$ with~$\hoLKan_FF'$. 

\begin{definition}\label{def:homotopical_dense}
A simplicial functor~$F\colon\bfC\to\bfM^\fc$ is {\em homotopically dense} if there exists an equivalence
\[
\Id_{\bfM^\fc}\simeq\hoLKan_FF
\]
as functors~$\bfM^\fc\to\bfM^\fc$.
\end{definition}

With the definition in place, we can now move on to explain the consequences of homotopical density for the purpose of calculations of spaces of homotopy coherent natural transformations and homotopy coherent centers.

\begin{proposition}\label{prop:homotopically_dense}
If the functor~$F\colon\bfC\to\bfM^\fc$ is homotopically dense, then there exists equivalences
\[
\hoNat(\Id_{\bfM^\fc}, G)\simeq\hoNat(F,GF)
\]
for all functors~\hbox{$G\colon\bfM^\fc\to\bfM^\fc$}.
\end{proposition}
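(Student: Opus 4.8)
The statement to prove is that, for a homotopically dense functor $F\colon\bfC\to\bfM^\fc$, there is an equivalence $\hoNat(\Id_{\bfM^\fc},G)\simeq\hoNat(F,GF)$ for every $G\colon\bfM^\fc\to\bfM^\fc$. The plan is to feed the defining equivalence $\Id_{\bfM^\fc}\simeq\hoLKan_FF$ into the universal property of the homotopy left Kan extension (Proposition~\ref{prop:Kan_extensions}) and then simplify the restricted functor. First I would use the invariance of $\hoNat$ under natural equivalences (the Proposition in Section~\ref{subsec:N1}), applied in the first variable, to rewrite
\[
\hoNat(\Id_{\bfM^\fc},G)\simeq\hoNat(\hoLKan_FF,G).
\]
Here one should be slightly careful: $\hoNat$ in the first variable is only guaranteed to be equivalence-invariant when the functors involved land in cofibrant objects, which is exactly the flexibility noted after equation~\eqref{eq:explicit_model}; since $F$ takes values in $\bfM^\fc$, the bar construction computing $\hoLKan_FF$ takes values in cofibrant objects, so this step is legitimate, and $G$ landing in $\bfM^\fc$ makes the target fibrant as required.

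Next I would apply Proposition~\ref{prop:Kan_extensions} with $Q=F$, $R=G$, and the structural functor of the Kan diagram being $F$ itself (so $\bfD=\bfM^\fc$ and $\bfT=\bfM^\fc$). That proposition gives an equivalence
\[
\hoNat(\hoLKan_FF,G)\simeq\hoNat(F,F^*G),
\]
and by definition $F^*G=G\circ F=GF$. Concatenating the two displayed equivalences yields the claim. So the proof is essentially a two-line composition once the hypotheses of the cited results are checked.

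The one genuine subtlety — and the step I expect to be the main obstacle — is the bookkeeping about fibrancy and cofibrancy in the statement of Proposition~\ref{prop:Kan_extensions}. That proposition was stated with a functor $Q$ landing in $\bfM^\fc$ and $R$ landing in $\bfM^\fc$; here $Q=F$ indeed lands in $\bfM^\fc$, and $R=G$ lands in $\bfM^\fc$ by hypothesis, so the hypotheses are met verbatim. The only remaining point is that $\hoLKan_FF$ as it appears in Definition~\ref{def:homotopical_dense} already includes a fibrant/cofibrant replacement to land in $\bfM^\fc$, whereas the $\hoLKan_FF$ fed into Proposition~\ref{prop:Kan_extensions} is computed at the level of bar constructions; but since $\hoNat$ is insensitive to such a replacement (it only needs cofibrant source and fibrant target, and replacement is an objectwise equivalence preserving these properties up to equivalence), the two agree up to the equivalences already invoked. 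I would spell this compatibility out in one sentence and otherwise let the two cited propositions do the work.
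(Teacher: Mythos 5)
Your proof is correct and follows essentially the same two-step argument as the paper: substitute $\hoLKan_FF$ for the identity via the defining equivalence of homotopical density, then apply the universal property of homotopy left Kan extensions (Proposition~\ref{prop:Kan_extensions}) with $Q=F$, $R=G$. The additional remarks you make about cofibrancy and fibrancy are sound bookkeeping that the paper leaves implicit.
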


\begin{proof}
By hypothesis, the identity functor on~$\bfM^\fc$ is a homotopy left Kan extension of~$F$ along itself. This gives
\[
\hoNat(\Id_{\bfM^\fc}, G)\simeq\hoNat(\hoLKan_FF,G).
\]
By the adjunction property of the left Kan extension, we have
\[
\hoNat(\hoLKan_FF,G)\simeq\hoNat(F,GF).
\]
Together, these prove the result.
\end{proof}

Taking~$G=\Id_{\bfM^\fc}$ as well, the preceding result has the following consequence.

\begin{corollary}\label{cor:dense_and_centers}
For all homotopically dense functors~$F\colon\bfC\to\bfM^\fc$, there exists an equivalence
\[
\hoZ(\bfM^\fc)\simeq\hoNat(F,F).
\]
\end{corollary}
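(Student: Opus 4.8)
The plan is to derive Corollary~\ref{cor:dense_and_centers} as the immediate specialization of Proposition~\ref{prop:homotopically_dense}, exactly as the surrounding discussion suggests. Recall that the homotopy coherent center $\hoZ(\bfM^\fc)$ is by definition $\hoNat(\Id_{\bfM^\fc},\Id_{\bfM^\fc})$, so the statement to be proved is that $\hoNat(\Id_{\bfM^\fc},\Id_{\bfM^\fc})\simeq\hoNat(F,F)$.

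First I would invoke Proposition~\ref{prop:homotopically_dense}, which produces, for every simplicial functor $G\colon\bfM^\fc\to\bfM^\fc$, an equivalence $\hoNat(\Id_{\bfM^\fc},G)\simeq\hoNat(F,GF)$. Then I would simply set $G=\Id_{\bfM^\fc}$. On the left-hand side this gives $\hoNat(\Id_{\bfM^\fc},\Id_{\bfM^\fc})=\hoZ(\bfM^\fc)$. On the right-hand side $GF=\Id_{\bfM^\fc}\circ F=F$, so we obtain $\hoNat(F,F)$. Concatenating yields the desired equivalence $\hoZ(\bfM^\fc)\simeq\hoNat(F,F)$.

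There is essentially no obstacle here: the corollary is a one-line consequence of the proposition once the substitution $G=\Id_{\bfM^\fc}$ is made, together with the unwinding of the definition~\eqref{def:Z} of $\hoZ$. The only point worth a moment's care is that $\hoNat(F,F)$ is meaningful in the sense of Section~\ref{subsec:bar}: since $F$ takes values in $\bfM^\fc$, both the source object $Fx_n$ is cofibrant and the target object $Fx_0$ is fibrant, so the explicit model~\eqref{eq:explicit_model} applies and the totalization is homotopically well behaved. Given that, the proof is just the following:

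\begin{proof}
Apply Proposition~\ref{prop:homotopically_dense} with~$G=\Id_{\bfM^\fc}$. This yields an equivalence
\[
\hoNat(\Id_{\bfM^\fc},\Id_{\bfM^\fc})\simeq\hoNat(F,\Id_{\bfM^\fc}\circ F)=\hoNat(F,F).
\]
Since~$\hoZ(\bfM^\fc)=\hoNat(\Id_{\bfM^\fc},\Id_{\bfM^\fc})$ by~\eqref{def:Z}, this is the asserted equivalence.
\end{proof}
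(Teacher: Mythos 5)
Your proof is correct and matches the paper's own derivation: the paper states the corollary immediately after Proposition~\ref{prop:homotopically_dense} with the remark that it follows by taking~$G=\Id_{\bfM^\fc}$, which is exactly the substitution you make. The additional remark about well-definedness via~\eqref{eq:explicit_model} is sound but not needed beyond what the paper already assumes.
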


Putting this together with Corollary~\ref{cor:full_and_faithful_and_centers}, we obtain the next   result.

\begin{proposition}\label{prop:twin_criterion}
For all functors~$F\colon\bfC\to\bfM^\fc$ that are both homotopically full and faithful as well as homotopically dense, the homotopy coherent centers of~$\bfC$ and~$\bfM^\fc$ are equivalent.
\end{proposition}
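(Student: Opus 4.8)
The plan is to chain together the two centering criteria already established. Concretely, if $F\colon\bfC\to\bfM^\fc$ is homotopically dense, then Corollary~\ref{cor:dense_and_centers} gives an equivalence $\hoZ(\bfM^\fc)\simeq\hoNat(F,F)$. On the other hand, the hypothesis that $F$ is homotopically full and faithful is exactly what is needed to apply Corollary~\ref{cor:full_and_faithful_and_centers}, which yields an equivalence $\hoZ(\bfC)\simeq\hoNat(F,F)$ --- here one should note that, strictly speaking, Corollary~\ref{cor:full_and_faithful_and_centers} was stated for functors between locally Kan simplicial categories, and indeed $\bfC$ may be replaced by a locally Kan fibrant replacement without changing $\hoZ(\bfC)$ up to equivalence, while $\bfM^\fc$ is already locally Kan. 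Composing these two equivalences through the common space $\hoNat(F,F)$ then produces the desired equivalence $\hoZ(\bfC)\simeq\hoZ(\bfM^\fc)$.

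In slightly more detail, I would argue as follows. First invoke homotopical density and Proposition~\ref{prop:homotopically_dense} with $G=\Id_{\bfM^\fc}$, i.e.\ Corollary~\ref{cor:dense_and_centers}, to obtain
\[
\hoZ(\bfM^\fc)=\hoNat(\Id_{\bfM^\fc},\Id_{\bfM^\fc})\simeq\hoNat(F,F).
\]
Next, because $F$ is homotopically full and faithful, Proposition~\ref{prop:cancellation} with $G=H=\Id_\bfC$ --- that is, Corollary~\ref{cor:full_and_faithful_and_centers} --- gives
\[
\hoZ(\bfC)=\hoNat(\Id_\bfC,\Id_\bfC)\simeq\hoNat(F,F).
\]
Concatenating the two displayed equivalences yields $\hoZ(\bfC)\simeq\hoZ(\bfM^\fc)$, which is the assertion.

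I do not anticipate any serious obstacle here: the proposition is a formal consequence of the two corollaries, and all the real work --- the bar-construction manipulations behind the universal property of homotopy left Kan extensions, and the level-wise equivalence argument for cancellation of homotopically full and faithful functors --- has already been carried out in Proposition~\ref{prop:Kan_extensions} and Proposition~\ref{prop:cancellation}. The only point requiring a word of care is the mild mismatch in hypotheses: Corollary~\ref{cor:full_and_faithful_and_centers} presupposes that the source category is locally Kan, whereas $\bfC$ is an arbitrary simplicial category, so one should pass to a fibrant replacement $\bfC'\to$ well, $\bfC\to\bfC'$ and use that $\hoZ(\bfC)\simeq\hoZ(\bfC')$ together with the fact that a homotopically full and faithful functor out of $\bfC$ factors (up to equivalence) through $\bfC'$. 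This is exactly the kind of bookkeeping already discussed in Section~\ref{subsec:N2}, so it adds nothing essential to the argument.
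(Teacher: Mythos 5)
Your proof is correct and coincides with the paper's own argument: the paper simply observes that Proposition~\ref{prop:twin_criterion} follows by combining Corollary~\ref{cor:dense_and_centers} with Corollary~\ref{cor:full_and_faithful_and_centers}, both identifying the respective centers with $\hoNat(F,F)$. Your extra remark about replacing $\bfC$ by a locally Kan model is not needed under the paper's standing convention that $\bfC$ is locally Kan, but it does no harm.
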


The next section will feature an important class of functors for which this result applies, the Yoneda embeddings.

%%%

\section{Presheaves}\label{sec:presheaves}

This section discusses homotopy coherent centers for diagram categories. There exists a choice as to the variance of the diagrams (covariant and contravariant), and we have chosen to use contravariant functors, also known as presheaves. We begin by recalling them in the simplicial setting.

%%%

\subsection{Simplicial presheaves}

Let~$\bfC$ be a simplicial category in the sense of Section~\ref{sec:centers}. Then a {\em presheaf} on~$\bfC$ is a simplicial functor~\hbox{$M\colon\bfC^\op\to\bfS$} into the simplicial category~$\bfS=\sSet$ of spaces/simplicial sets. The presheaves together with their simplicial natural transformations form a simplicial category~$\sPre(\bfC)$.

If the category~$\bfC$ is discrete, then a simplicial presheaf can be thought of both as a presheaf of spaces, or as a simplicial object in the category of sheaves (of sets). It follows that in this case~$\sPre(\bfC)$ is indeed the category of simplicial objects in the category~$\Pre(\bfC)$ of presheaves on~$\bfC$.

\begin{remark}\label{rem:sPre_model}
The category~$\sPre(\bfC)$ of simplicial presheaves comes with simplicial Quillen model structures such that the equivalences are formed objectwise. In one such, the {\it projective} one, the fibrations are also formed objectwise, and the fibrant sheaves are those that take values in Kan complexes. See~\cite[Proposition~IX.1.4]{Goerss+Jardine}, for example. 
\end{remark}

%%%

\subsection{Derived mapping spaces for simplicial presheaves}

If~$F$,~$G$ are presheaves of Kan complexes on a simplicial category~$\bfC$, then there are at least two ways of thinking about the derived space of maps~$F\to G$: 

On the one hand, we can think of~$F$ and~$G$ as simplicial functors between simplicial categories, and then we have defined a space~$\hoNat(F,G)$ of homotopy coherent natural transformations between them as in~\eqref{eq:explicit_model}.

On the other hand, using the projective Quillen model structure on the category~$\bfP=\sPre(\bfC)$ of presheaves on~$\bfC$, where the equivalences and fibrations are defined object-wise, the presheaves of Kan complexes are precisely the fibrant objects. Therefore, the derived mapping space should be~$\bfP(F',G)$, where~$F'$ is a cofibrant replacement of~$F$.

The following result assures that the resulting spaces are equivalent.

\begin{proposition}
If~$F,G$ are two presheaves of Kan complexes on a simplicial category~$\bfC$, then there the space~$\hoNat(F,G)$ is equivalent to the derived space of morphisms~$F\to G$ in the projective Quillen model structure on~$\sPre(\bfC)$.
\end{proposition}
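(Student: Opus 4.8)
The plan is to identify the explicit totalization model $\hoNat(F,G) = \Tot\Pi^\bullet(F,G)$ with the derived mapping space in the projective model structure by exhibiting a specific cofibrant replacement of $F$ whose space of strict natural maps to $G$ realizes exactly the totalization. The natural candidate is the bar construction $\rmB_\bullet(\bfC_{\Id},\bfC,F)$ from Section~\ref{subsec:bar}, or rather its realization $\rmB(\bfC_{\Id},\bfC,F)$: by the Yoneda-lemma computation recorded in~\eqref{eq:without}, the cosimplicial space $\Nat(\rmB_\bullet(\bfC_{\Id},\bfC,F),G)$ has level $n$ equal to $\prod_{x_0,\dots,x_n}\Map(\bfC(x_1,x_0)\times\dots\times\bfC(x_n,x_{n-1}),\bfS(F(x_n),G(x_0)))$, which is precisely $\Pi^n(F,G)$ when $F$ and $G$ take values in Kan complexes. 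Hence $\hoNat(F,G)\cong\Tot\Nat(\rmB_\bullet(\bfC_{\Id},\bfC,F),G)\cong\Nat(\rmB(\bfC_{\Id},\bfC,F),G)$, where the last step uses that $\Tot$ of the levelwise mapping-space cosimplicial object is the mapping space out of the realization. So the proof reduces to the statement that $\rmB(\bfC_{\Id},\bfC,F)\to F$ is a cofibrant replacement of $F$ in the projective model structure on $\sPre(\bfC)$.

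First I would record that the augmentation $\rmB(\bfC_{\Id},\bfC,F)\to F$ is an objectwise equivalence: evaluated at an object $c$, the bar construction $\rmB_\bullet(\bfC_{\Id},\bfC,F)(c)$ has an extra degeneracy coming from the identity of $c$ (this is the standard simplicial-homotopy-equivalence argument for two-sided bar constructions $\rmB(H,\bfC,F)$ when $H$ is representable), so its realization is equivalent to $F(c)$. Second, I would argue that $\rmB(\bfC_{\Id},\bfC,F)$ is projective-cofibrant. Each $\rmB_n(\bfC_{\Id},\bfC,F)$ is a coproduct of terms of the form $\bigl(\bfC(x_1,x_0)\times\dots\times\bfC(x_n,x_{n-1})\bigr)\otimes \rmY_\bfC(x_0)\otimes F(x_n)$ — i.e.\ representables tensored with spaces — which are projective-cofibrant, and the simplicial object $\rmB_\bullet$ is Reedy-cofibrant (its latching maps are the inclusions of the degenerate part, which are split monomorphisms of projective-cofibrant presheaves), so its realization $\Delta^\bullet\otimes_\Delta\rmB_\bullet$ is projective-cofibrant. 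Therefore $\bfP(\rmB(\bfC_{\Id},\bfC,F),G)$ computes the derived mapping space, and it is canonically isomorphic to $\Nat(\rmB(\bfC_{\Id},\bfC,F),G)\cong\hoNat(F,G)$.

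The main obstacle is the careful treatment of the interchange $\Tot\Nat(\rmB_\bullet,G)\cong\Nat(|\rmB_\bullet|,G)$ together with the cofibrancy bookkeeping, since both sides must be taken in a homotopically correct way. On the totalization side one needs $\Pi^\bullet(F,G)$ to be a fibrant cosimplicial space so that $\Tot$ is the homotopy limit; this holds because $G$ takes values in Kan complexes and the cosimplicial structure maps are induced by degeneracies/faces in a way that makes the matching maps fibrations — a point that should be invoked in the spirit of the Reedy-fibrancy remark already made before the proposition preceding Section~\ref{subsec:N2}. On the presheaf side one needs the realization to be genuinely cofibrant, not merely levelwise so, which is why the Reedy-cofibrancy of $\rmB_\bullet$ is the key technical input. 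I would also note that size issues are harmless here: although $\bfC$ may be large, the relevant mapping spaces are small up to equivalence, as flagged in the discussion of largeness in Section~\ref{sec:foundations}. Once these compatibility points are in place, the isomorphism $\hoNat(F,G)\cong\bfP(\rmB(\bfC_{\Id},\bfC,F),G)$ together with the identification of the right-hand side as the derived mapping space finishes the proof.
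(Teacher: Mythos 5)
Your proof is correct and follows essentially the same route as the paper: identify~$\hoNat(F,G)$ with the strict mapping space out of the realized bar construction, then verify that this realization is a cofibrant replacement of~$F$ (via Reedy cofibrancy of~$\rmB_\bullet(\bfC_{\Id},\bfC,F)$) while~$G$ is objectwise, hence projectively, fibrant. You supply two steps the paper leaves implicit, namely the extra-degeneracy argument showing $\rmB(\bfC_{\Id},\bfC,F)\to F$ is an objectwise equivalence, and the Reedy fibrancy of~$\Pi^\bullet(F,G)$ so that~$\Tot$ is a homotopy limit. The one place your phrasing is slightly off is the justification that the latching maps are cofibrations: being a split monomorphism between projective-cofibrant presheaves is not by itself sufficient for a projective cofibration; the operative reason, as in the paper, is that each latching map is a coproduct of maps of the form $\rmY_\bfC(x_0)\otimes(\text{monomorphism of simplicial sets})$, and these are cofibrations by~SM7 because representables are projectively cofibrant.
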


\begin{proof}
The definition of~$\hoNat(F,G)$ is~$\Nat(\rmB_\bullet(\bfC_{\Id},\bfC,F),G)$, where~$\rmB_\bullet(\bfC_{\Id},\bfC,F)$ is the bar construction on~$F$. See Section~\ref{subsec:bar}. Now~$\Nat(\rmB_\bullet(\bfC_{\Id},\bfC,F),G)$ is the actual mapping space in~$\sPre(\bfC)$. We have to show that~$\rmB_\bullet(\bfC_{\Id},\bfC,F)$ is cofibrant and that~$G$ is fibrant in the Quillen model structure under consideration. This is clear for the latter, since~$G$ is object-wise fibrant by assumption. It remains to be seen that~$\rmB_\bullet(\bfC_{\Id},\bfC,F)$ is a Reedy cofibrant simplicial diagram, i.e.~that the latching maps are cofibrations. But these are coproducts of inclusions of the degenerate parts (where some factor is an identity) of products of mapping spaces. These maps are cofibrations because the functor~$x\mapsto\bfC_{\Id}(x,y)$ is cofibrant when thought of as a (representable) presheaf.
\end{proof}

%%%

\subsection{The Yoneda embedding}\label{ssec:Yoneda}

If~$\bfC$ is a simplicial category, then the Yoneda embedding
\[
\rmY=\rmY_\bfC\colon\bfC\longrightarrow\sPre(\bfC)
\]
sends each object~$x$ to the representable functor
\[
\rmY_\bfC(x)\colon t\longmapsto\bfC(t,x)
\]
that it defines. Therefore, if~$\bfC$ is locally Kan, then the values of the presheaf~$\rmY_\bfC(x)$ are Kan complexes, so that for every object~$x$ of~$\bfC$ the presheaf~$\rmY_\bfC(x)$ is fibrant in the usual Quillen model category on~$\sPre(\bfC)$.

The enriched Yoneda lemma asserts that the space of maps~$\rmY_\bfC(x)\to M$ is isomorphic to the space~$M(x)$ via the evaluation at the identity. For example, it follows that the image of the Yoneda embedding consists of cofibrant presheaves in the Quillen model structure of Remark~\ref{rem:sPre_model}.

The Yoneda lemma also implies that the Yoneda embedding deserves its name: it is full and faithful. 

The Yoneda embedding will not be essentially surjective in general, but it is always dense in the sense of Section~\ref{ssec:dense}:
\[
\Id_{\sPre(\bfC)}\cong\LKan_\rmY\rmY
\]
as functors~$\sPre(\bfC)\to\sPre(\bfC)$. This is perhaps most easily seen by the characterization of dense functors~$F\colon\bfC\to\bfD$ by the full and faithfulness of the functor
\[
\bfD\longrightarrow\sPre(\bfC),\,d\longmapsto\bfD(F?,d).
\]
If~$F=\rmY_\bfC$ is the Yoneda embedding, this functor is isomorphic to the identity, again by the Yoneda lemma.

Full and faithful functors are also homotopically so by Proposition~\ref{prop:ff_implies_hff}. In particular, the Yoneda embedding is homotopically full and faithful. 

There is no obvious reason why the corresponding result for dense functors should also be true, so that the following requires proof.

\begin{proposition}\label{prop:Yoneda_homotopically_dense}
Every Yoneda embedding is homotopically dense.
\end{proposition}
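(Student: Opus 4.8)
The plan is to show that the canonical comparison map from the homotopy left Kan extension $\hoLKan_{\rmY}\rmY$ to the strict left Kan extension $\LKan_{\rmY}\rmY\cong\Id_{\sPre(\bfC)}$ is an objectwise equivalence. Since by definition $(\hoLKan_{\rmY}\rmY)(M)=\rmB(\sPre(\bfC)_{\rmY},\bfC,\rmY)(M)$ is the geometric realization of the bar construction, while $(\LKan_{\rmY}\rmY)(M)=\sPre(\bfC)_{\rmY}\otimes_\bfC\rmY$ is the corresponding coend, and the latter is the colimit of the same simplicial object whose realization gives the former, the comparison map is the natural map from the realization (fat/homotopy colimit of the simplicial bar object) to the ordinary colimit (coequalizer of the first two faces). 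So the crux is: this particular bar construction has the property that its realization computes the colimit up to equivalence.

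First I would unwind the weight bimodule. Because the Yoneda embedding is full and faithful, $\sPre(\bfC)_{\rmY}(x,M)=\sPre(\bfC)(\rmY x,M)\cong M(x)$ by the enriched Yoneda lemma, so
\[
\rmB_n(\sPre(\bfC)_{\rmY},\bfC,\rmY)(M)\;\cong\;\coprod_{x_0,\dots,x_n} M(x_0)\times\bfC(x_1,x_0)\times\cdots\times\bfC(x_n,x_{n-1})\otimes\rmY(x_n),
\]
a simplicial presheaf, and evaluating at an object $t$ just inserts $\bfC(t,x_n)$ in the last slot. Thus, evaluated at $t$, this is exactly the bar construction $\rmB_\bullet(M,\bfC,\rmY(-)(t))$ computing $M\otimes_\bfC \bfC(t,-)$, and the claim reduces to the pointwise statement that the realization of this simplicial space maps by an equivalence to its coequalizer — equivalently, that $\rmB(M,\bfC,\bfC(t,-))\simeq M(t)$ for all $t$. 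This is the simplicial/homotopical version of the familiar fact that $M\otimes_\bfC\bfC(t,-)\cong M(t)$ (the "density" computation), and it follows because the simplicial object $\rmB_\bullet(M,\bfC,\bfC(t,-))$ comes with an extra degeneracy (a backwards contraction) induced by the unit $\star\to\bfC(t,t)$, making it a simplicial homotopy equivalence onto the constant simplicial space $M(t)$; realization then produces an equivalence. Since every space is cofibrant and all the mapping spaces involved are Kan (as $\bfC=\bfM^\fc$, or after fibrant replacement), this realization is a genuine homotopy colimit, so the argument is homotopically meaningful.

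The main obstacle is bookkeeping rather than conceptual: one must check that the realization of the bar construction, formed via the tensor in $\bfM$ and then pushed into $\bfM^\fc$ by a fibrant/cofibrant replacement functor as in Section~\ref{ssec:dense}, genuinely computes the homotopy colimit and that the extra-degeneracy contraction is compatible with realization — i.e.\ that the levelwise simplicial homotopy equivalence survives $\Delta^\bullet\otimes_\Delta(-)$. This is standard bar-construction calculus (the references \cite{Hollender+Vogt}, \cite{Meyer} cover exactly this), and the simplicial homotopy equivalence is preserved by any functor, in particular by geometric realization, so no cofibrancy hypotheses on the simplicial direction are needed for that step. One should also record that an objectwise equivalence of simplicial functors $\bfM^\fc\to\bfM^\fc$ is precisely an equivalence in the sense used in Definition~\ref{def:homotopical_dense}, which is immediate from the definitions. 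Assembling: $\Id_{\sPre(\bfC)}\cong\LKan_{\rmY}\rmY\xleftarrow{\ \simeq\ }\hoLKan_{\rmY}\rmY$ objectwise, so the Yoneda embedding is homotopically dense.
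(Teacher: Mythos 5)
Your proof is correct, and it takes a genuinely different route from the paper's. The paper works entirely at the level of weight bimodules: it introduces the dual bimodule $\bfP^\op_\rmY(f,x)=\bfP(f,\rmY x)$ and chains together three auxiliary equivalences
\[
\rmB(\bfP_{\Id},\bfP,\Id_\bfP)\simeq\Id_\bfP,\qquad
\rmB(\bfP^\op_\rmY,\bfP,\Id_\bfP)\simeq\rmY,\qquad
\rmB(\bfP_\rmY,\bfC,\bfP^\op_\rmY)\simeq\bfP_{\Id}
\]
with the associativity (Fubini) of bar constructions to obtain
\[
\hoLKan_\rmY\rmY=\rmB(\bfP_\rmY,\bfC,\rmY)\simeq\rmB(\rmB(\bfP_\rmY,\bfC,\bfP^\op_\rmY),\bfP,\Id_\bfP)\simeq\rmB(\bfP_{\Id},\bfP,\Id_\bfP)\simeq\Id_\bfP.
\]
You instead reduce to a pointwise statement and verify it directly: using the enriched Yoneda lemma to rewrite $\bfP_\rmY(x,M)\cong M(x)$, evaluating at $t\in\bfC$ turns the bar object into $\rmB_\bullet(M,\bfC,\bfC(t,-))$, and the unit $\star\to\bfC(t,t)$ supplies an extra degeneracy contracting it onto $M(t)$. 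The two arguments buy different things. Yours is more elementary and self-contained: the extra degeneracy is checked by hand, the simplicial homotopy equivalence is preserved by realization with no cofibrancy hypotheses in the simplicial direction, and crucially all bar constructions involved are indexed over the small category $\bfC$. The paper's formal manipulation is slicker to write but pushes the content into the three displayed equivalences, each of which is essentially the same coYoneda/extra-degeneracy fact in disguise; moreover two of those bar constructions are taken over the large category $\bfP$ itself, which is set-theoretically heavier and is exactly the sort of thing the universe conventions of Section~2 were introduced to absorb. Your identification of the comparison map $\hoLKan_\rmY\rmY\to\LKan_\rmY\rmY\cong\Id_{\sPre(\bfC)}$ as the map to check, rather than reproving density from scratch, is also a clean organizational choice and matches the intent of Definition~\ref{def:homotopical_dense}.
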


\begin{proof}
Let~$\bfC$ be a simplicial category. To enhance readability in the following calculation, let us again abbreviate~$\bfP=\sPre(\bfC)$ for its category of simplicial presheaves, and~$\rmY=\rmY_\bfC$ for its Yoneda embedding~$\rmY\colon\bfC\to\bfP$. We have to show that~$\hoLKan_\rmY\rmY\simeq\Id_\bfP$. 

We start with the definition~$\hoLKan_\rmY\rmY=\rmB(\bfP_\rmY,\bfC,\rmY)$, where~$\bfP_\rmY$ is our notation for the bimodule~$\bfP_\rmY(x,f)=\bfP(\rmY x,f)$ for~$x\in\bfC$ and~$f\in\bfP$. We introduce its~`dual' bimodule, which is~$\bfP^\op_\rmY(f,x)=\bfP(f,\rmY x)$. 
Composition of the equivalence
\[
\rmB(\bfP_{\Id},\bfP,\Id_\bfP)\simeq\Id_\bfP
\]
with the Yoneda embedding gives the evaluation equivalence
\[
\rmB(\bfP^\op_\rmY,\bfP,\Id_\bfP)\simeq\rmY.
\]
Similarly, there is a composition equivalence
\[
\rmB(\bfP_\rmY,\bfC,\bfP^\op_\rmY)\simeq\bfP_{\Id}.
\]
Using these, we can calculate
\begin{align*}
\hoLKan_\rmY\rmY
&=\rmB(\bfP_\rmY,\bfC,\rmY)\\
&\cong\rmB(\bfP_\rmY,\bfC,\rmB(\bfP^\op_\rmY,\bfP,\Id_\bfP))\\
&\cong\rmB(\rmB(\bfP_\rmY,\bfC,\bfP^\op_\rmY),\bfP,\Id_\bfP)\\
&\cong\rmB(\bfP_{\Id},\bfP,\Id_\bfP),
\end{align*}
and this is indeed equivalent to~$\Id_\bfP$, as desired.
\end{proof}

%%%

\subsection{Centers of categories of presheaves}

We are now ready to prove the main result of this section.

\parbox{\linewidth}{\begin{theorem}{\upshape\bf(Morita invariance)}\label{thm:centers_of_sPre}
The Yoneda embedding defines an equivalence
\[
\hoZ(\sPre(\bfC)^\fc)\simeq\hoZ(\bfC)
\]
of homotopy coherent centers.
\end{theorem}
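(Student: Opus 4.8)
The plan is to deduce the statement directly from Proposition~\ref{prop:twin_criterion}, applied to the Yoneda embedding $\rmY=\rmY_\bfC\colon\bfC\to\sPre(\bfC)$. First I would fix the ambient model structure: by Remark~\ref{rem:sPre_model} and Examples~\ref{ex:all_csmc} the projective structure makes $\sPre(\bfC)$ a combinatorial simplicial model category, so that $\sPre(\bfC)^\fc$ is a locally Kan simplicial category and $\hoZ(\sPre(\bfC)^\fc)$ is precisely the left-hand side of the asserted equivalence. We may and do assume that $\bfC$ is locally Kan; the general case follows by passing to a locally Kan replacement of $\bfC$, using that the homotopy coherent center and the projective homotopy theory of simplicial presheaves are invariant under equivalence of simplicial categories.

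With $\bfC$ locally Kan, each representable presheaf $\rmY(x)=\bfC(?,x)$ takes values in Kan complexes, hence is fibrant in the projective structure, and it is cofibrant by the discussion in Section~\ref{ssec:Yoneda}; so $\rmY$ factors as a simplicial functor $\bfC\to\sPre(\bfC)^\fc$, which is the type of functor to which Proposition~\ref{prop:twin_criterion} applies. The two hypotheses of that proposition are then at hand. On the one hand, the enriched Yoneda lemma identifies the map $\bfC(x,y)\to\sPre(\bfC)(\rmY x,\rmY y)$ with an identity via $\sPre(\bfC)(\rmY x,\rmY y)\cong(\rmY y)(x)=\bfC(x,y)$, so $\rmY$ is full and faithful, hence homotopically so by Proposition~\ref{prop:ff_implies_hff}. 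On the other hand, $\rmY$ is homotopically dense: this is exactly Proposition~\ref{prop:Yoneda_homotopically_dense}. Proposition~\ref{prop:twin_criterion} then yields the equivalence $\hoZ(\bfC)\simeq\hoZ(\sPre(\bfC)^\fc)$, realized by $\rmY$ through the chain $\hoZ(\bfC)\simeq\hoNat(\rmY,\rmY)\simeq\hoZ(\sPre(\bfC)^\fc)$ supplied by Corollaries~\ref{cor:full_and_faithful_and_centers} and~\ref{cor:dense_and_centers}.

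I expect no serious obstacle in this section: the real work has been front-loaded into the preparatory results, and what remains is bookkeeping — pinning down the projective model structure so that $\sPre(\bfC)^\fc$ is the correct target, checking that representables are fibrant and cofibrant, and reducing to the locally Kan case. The one genuinely delicate ingredient, namely that the Yoneda embedding is homotopically dense and not merely strictly dense, was dispatched in Proposition~\ref{prop:Yoneda_homotopically_dense} via the bar-construction bimodule calculus; that is where the essential content of the Morita invariance statement actually resides.
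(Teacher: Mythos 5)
Your proof is correct and follows exactly the paper's approach: invoke Proposition~\ref{prop:twin_criterion} for the Yoneda embedding, using Proposition~\ref{prop:ff_implies_hff} for homotopical full faithfulness and Proposition~\ref{prop:Yoneda_homotopically_dense} for homotopical density. The paper's own proof is precisely this one-liner; you have simply made explicit the bookkeeping about the projective model structure, fibrancy and cofibrancy of representables, and the reduction to the locally Kan case, which the paper handles in the surrounding text and the remark following the theorem.
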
}

\begin{remark}
According to our conventions, the simplicial category~$\bfC$ in the statement is assumed to be locally Kan. However, this is not really necessary: If~\hbox{$\bfC\to\bfC'$} is an equivalence of simplicial categories, where~$\bfC'$ is locally Kan, then~$\sPre(\bfC)$ is Quillen equivalent to~$\sPre(\bfC')$, and so these two presheaf Quillen model categories have the same homotopy coherent centers.
\end{remark}

\begin{proof}
Since the Yoneda embedding is both homotopically full and faithful as well as homotopically dense, Proposition~\ref{prop:twin_criterion} applies to give the result.
\end{proof}

We note that the proof has more precisely shown that both~$\hoZ(\sPre(\bfC)^\fc)$ and~$\hoZ(\bfC)$  agree with~$\hoNat(\rmY_\bfC,\rmY_\bfC)$.

As a consequence of Theorem~\ref{thm:centers_of_sPre}, we see that the homotopy coherent center of the large category~$\sPre(\bfC)$ is essentially small. 

The nickname of Theorem~\ref{thm:centers_of_sPre} stems from the following corollary.

\begin{corollary}
The homotopy coherent center of a simplicial category~$\bfC$ depends only on the category of simplicial presheaves on~$\bfC$. 
\end{corollary}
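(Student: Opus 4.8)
The plan is to deduce this immediately from Morita invariance (Theorem~\ref{thm:centers_of_sPre}) together with the invariance of homotopy coherent centers under equivalences of locally Kan simplicial categories recorded after the definition~\eqref{def:Z}, following \cite[Theorem~4.1]{Szymik}. First I would spell out what the statement asserts: if~$\bfC$ and~$\bfD$ are simplicial categories whose presheaf categories agree---that is, for which there is a Quillen equivalence~$\sPre(\bfC)\simeq\sPre(\bfD)$, or equivalently an equivalence of the associated locally Kan simplicial categories~$\sPre(\bfC)^\fc$ and~$\sPre(\bfD)^\fc$ of fibrant/cofibrant objects---then~$\hoZ(\bfC)$ and~$\hoZ(\bfD)$ are equivalent.

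Granting this reformulation, the proof is a three-step chain of equivalences. By Theorem~\ref{thm:centers_of_sPre}, applied once to~$\bfC$ and once to~$\bfD$, the respective Yoneda embeddings give~$\hoZ(\bfC)\simeq\hoZ(\sPre(\bfC)^\fc)$ and~$\hoZ(\bfD)\simeq\hoZ(\sPre(\bfD)^\fc)$. By hypothesis~$\sPre(\bfC)^\fc$ and~$\sPre(\bfD)^\fc$ are equivalent as (locally Kan) simplicial categories, and equivalent locally Kan simplicial categories have equivalent homotopy coherent centers, so~$\hoZ(\sPre(\bfC)^\fc)\simeq\hoZ(\sPre(\bfD)^\fc)$. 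Concatenating the three equivalences yields~$\hoZ(\bfC)\simeq\hoZ(\bfD)$, which is the assertion.

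The only point requiring a little care---and the main obstacle, such as it is---is the bookkeeping that feeds the hypothesis into the argument: one must note that~$\sPre(\bfC)$ is always a combinatorial simplicial model category (Examples~\ref{ex:all_csmc}), so that~$\sPre(\bfC)^\fc$ is a well-defined locally Kan simplicial category, and that a Quillen equivalence of simplicial model categories induces an equivalence of the associated~$\fc$-subcategories---exactly the fact already invoked at the end of Section~\ref{subsec:bar}. With this in place there is nothing further to prove; indeed the refined form of Theorem~\ref{thm:centers_of_sPre} noted above identifies all the centers in sight with~$\hoNat(\rmY_\bfC,\rmY_\bfC)$, a construction manifestly depending on~$\sPre(\bfC)$ alone.
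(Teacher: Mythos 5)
Your argument is correct and is exactly the one the paper intends: the corollary is stated as an immediate consequence of Theorem~\ref{thm:centers_of_sPre}, and your three-step chain (apply the theorem twice, use the invariance of~$\hoZ$ under equivalences of locally Kan simplicial categories recorded after~\eqref{def:Z}, concatenate) is the implicit proof. The remark about the refined form $\hoZ(\bfC)\simeq\hoNat(\rmY_\bfC,\rmY_\bfC)$ is a nice observation that mirrors the sentence the authors place just before the corollary.
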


%%%

\subsection{Applications}\label{ssec:apps_for_sPre}

Theorem~\ref{thm:centers_of_sPre} has numerous applications. We list some of the immediate ones.

\begin{example}\label{ex:spaces}
The category~$\bfS=\sSet$ of spaces/simplicial sets can be written as the category~$\sPre(\star)$ of sheaves on the singleton~$\star$. While it is an easy exercise to see that the simplicial center~$\Z(\bfS)$ is trivial, Theorem~\ref{thm:centers_of_sPre} immediately implies that also the homotopy coherent center~$\hoZ(\bfS^\fc)$ of the simplicial category~$\bfS^\fc$ of Kan complexes is contractible. While this result may not be surprising, it is certainly not clear from the definition of the homotopy coherent center.
\end{example}

For the category of {\it pointed} spaces, see Example~\ref{ex:pointed}.

\begin{example}
More generally, if~$\bfG$ is a simplicial group, then~$\sPre(\bfG)$ is the category of right~$\bfG$-spaces. By Theorem~\ref{thm:centers_of_sPre} it follows that the homotopy coherent center of this category is just the homotopy coherent center of~$\bfG$ itself. The latter~(and its relation to the strict center~$\Z(\bfG)$ as studied in group theory) is already discussed in~\cite[Section~5]{Szymik}. 

Note that the homotopy theory encoded in the simplicial Quillen model category structure on~$\sPre(\bfG)$ is the one where the equivalences are the~$\bfG$-maps that are equivalences as maps of (underlying) spaces. In particular, cofibrant objects are free. The reader who desires to understand the homotopy coherent center of the category of~$\bfG$-spaces, where the homotopy theory takes fixed point data into account, will have no problems of modeling the homotopy theory as diagrams over a suitable orbit category. Then Theorem~\ref{thm:centers_of_sPre} applies as well.
\end{example}

\begin{example}\label{ex:overB}
Let~$B$ be a Kan complex, and consider the slice category~$\bfS\downarrow B$ of spaces over~$B$. The Quillen model category structure on~$\bfS\downarrow B$ is the one in which the equivalences and fibrations are created by the forgetful functor~$\bfS\downarrow B\to\bfS$. The result is Quillen equivalent to the category of~$\bfG B$-spaces, where~$\bfG B$ is the loop groupoid of~$B$, a simplicial groupoid that models the loop spaces of~$B$. By Theorem~\ref{thm:centers_of_sPre} it follows that the homotopy coherent center of this category is just the homotopy coherent center of~$\bfG B$ itself. The latter is already discussed in~\cite[Section~8]{Szymik}, and the result is
\begin{equation}\label{eq:parallel_transport}
\hoZ((\bfS\downarrow B)^\fc)\simeq\Omega(\Map(B,B),\id_B).
\end{equation}
Note that this can also be written
\[
\Omega(\Aut(B),\id_B)\simeq\Omega^2\rmB\!\Aut(B),
\]
since the component of the identity consists of homotopy automorphisms (equivalences). 

Actually, the simplicial set~$B$ does not have to be Kan complex. We can always replace is by an equivalent Kan complex~$B'$. Then the categories~$\bfS\downarrow B$ and~$\bfS\downarrow B'$ are Quillen equivalent, and the conclusion holds true if~$\Map(B,B)$ is replaced by the derived mapping space~$\Map(B',B')$.

We can offer the following interpretation of the equivalence~\eqref{eq:parallel_transport}. An element~$\omega$ in the space on the right hand side gives, for each point~$b$ of~$B$, a loop~$\omega(b)$ based at~$b$. A fibrant object in~$\bfS\downarrow B$ is a fibration~$E\to B$, and~$\omega$ corresponds to the map~$E\to E$ (over~$B$) that is the parallel transport~$E(b)\to E(b)$ along~$\omega(b)$ in the fibre~$E(b)$ over~$b$.
\end{example}

%%%

\section{Localizations}\label{sec:localizations}

We have seen in the previous Section~\ref{sec:presheaves} what happens to the homotopy coherent centers if we pass from a simplicial category~$\bfC$ to its category~$\sPre(\bfC)$ of simplicial presheaves. In this section, we study the passage from a Quillen model category~$\bfM$~(that may or may not be of the form~\hbox{$\bfM=\sPre(\bfC)$}) to one of its left Bousfield localizations. As we will see, there are examples where the homotopy coherent center is changed (Example~\ref{ex:center_changed}), but there are also conditions that ensure that this does not happen (Theorem~\ref{thm:when_localization_preserves_centers}). We start with a review of localizations, if only to fix notation.

%%%

\subsection{Bousfield localizations}

Let us first recall the essential aspects of Bousfield localizations in the general context of simplicial categories before we specialize to the examples of interest that all come from Quillen model categories.

\begin{definition}
Let~$\bfB$ be a simplicial category. A {\em left Bousfield localization} of~$\bfB$ is a simplicial category~$\bfL$ together with a homotopy adjunction (Definition~\ref{def:homotopy_adjunction})
\begin{equation}\label{eq:lBl}
I\colon\bfB\longleftrightarrow\bfL\colon J,
\end{equation}
with~$I$ the left adjoint, such that the right adjoint~$J$ is homotopically full and faithful~(Definition~\ref{def:homotopically_ff}). 
\end{definition}

It is an immediate consequence of the definition that the counits~$IJy\to y$ are equivalences in~$\bfL$, since there are equivalences
\[
\bfL(IJy,t)\simeq\bfB(Jy,Jt)\simeq\bfL(y,t)
\]
for all test objects~$t$. The other composition~$JI\colon\bfB\to\bfB$ is usually denoted by~$L$, and is called the associated {\em localization functor}. It is (split) homotopically idempotent, so that there exists an equivalence~$L^2\simeq L$, because the counit is an equivalence. The unit~$\Id_\bfB\to L$ induces equivalences
\[
\bfB(Lx,Ly)\overset{\sim}{\longrightarrow}\bfB(x,Ly)
\]
for all objects~$x$ and~$y$. An object~$x$ on~$\bfB$ is {\em local} if the unit~$x\to JIx=Lx$ of the adjunction is an equivalence in~$\bfB$. This is the case if and only if~$x\simeq Ly$ for some object~$y$ of~$\bfB$. 

%%%

\subsection{Localizing Quillen model categories}

Left Bousfield localizations arise from localizations~$W^{-1}\bfM$ of Quillen model categories~$\bfM$ with respect to a set~$W$ of morphisms in~$\bfM$ that one would like to add to the weak equivalences.

In our applications, the set $W$ (or the subcategory~$\bfW\subseteq\bfM$ that it generates) is always determined by a morphism~$f\colon A\to B$ in~$\bfM$ in the following sense. An object~$Z$ of~$\bfM$ is called {\em$f$-local} if and only if the map
\[
\bfM(f^\fc,Z^\fc)\colon\colon\bfM(B^\fc,Z^\fc)\to\bfM(A^\fc,Z^\fc)
\]
induced by the morphism~$f$ is an equivalence. A morphism~$g\colon X\to Y$ is called an~{\em$f$-equivalence} if and only if the maps
\[
\bfM(g^\fc,Z^\fc)\colon\bfM(Y^\fc,Z^\fc)\to\bfM(X^\fc,Z^\fc)
\]
are equivalences for all~$f$-local objects~$Z$. For example, it is immediate that the morphism~$f$ itself is an~$f$-equivalence. The~$f$-equivalences form a subcategory~$\bfW_f\subseteq\bfM$ that has the same objects, since all identities are~$f$-equivalences. All examples of pairs~$(\bfM,\bfW)$ considered in this text can be written in the form~$(\bfM,\bfW_f)$ for a suitable morphism~$f$.

Given a morphism~$f$ in a (simplicial) category~$\bfP=\sPre(\bfC)$ of simplicial presheaves, there are actually simplicial Quillen model structures on~$\bfP$ such that the class of model equivalences agrees with the class~$\bfW_f$ of~$f$-equivalences as described before. In one such, the cofibrations are formed objectwise, and then the fibrant objects are the~$f$-local objects that are fibrant as simplicial presheaves. See~\cite[Theorem~4.1.1]{Hirschhorn}, for example. Hirschhorn uses the technical conditions of left properness and cellularity on~$\bfP$. But, he shows in~\cite[Proposition~4.1.4]{Hirschhorn} and~\cite[Proposition~4.1.7]{Hirschhorn} that these are satisfied for~$\bfP=\sPre(\bfC)$.

%%%

\subsection{Density}

The right adjoint of a left Bousfield localization is always full and faithful. This and the following result will be the first step towards the computation of homotopy coherent centers of localizations. 

\begin{proposition}{\upshape\bf(Density in localizations)}\label{prop:localization_dense}
For all left Bousfield localizations there exists an equivalence
\[
\Id_\bfL\simeq\hoLKan_II
\]
as functors~$\bfL\to\bfL$, so that the functor~$I$ is homotopically dense in the sense of Definition~\ref{def:homotopical_dense}.
\end{proposition}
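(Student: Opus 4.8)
The plan is to reduce the statement to the homotopical density of the Yoneda embedding (Proposition~\ref{prop:Yoneda_homotopically_dense}), transported through the localization. I would first set up the homotopy adjunction~\eqref{eq:lBl} and record the two basic facts established just above the statement: the counit~$IJy\to y$ is an equivalence in~$\bfL$, so~$IJ\simeq\Id_\bfL$, and the right adjoint~$J$ is homotopically full and faithful by hypothesis. From the latter and Proposition~\ref{prop:cancellation}, the functor~$J$ induces equivalences on all spaces~$\hoNat(-,-)$ after postcomposition, which is the main leverage I expect to use.

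The heart of the argument is the identification~$\hoLKan_II\simeq\Id_\bfL$. I would compute both sides against an arbitrary test functor~$R\colon\bfL\to\bfL$ using the universal property of homotopy left Kan extensions (Proposition~\ref{prop:Kan_extensions}): there is an equivalence
\[
\hoNat(\hoLKan_II,R)\simeq\hoNat(I,I^*R)=\hoNat(I,RI).
\]
On the other side I want to show~$\hoNat(\Id_\bfL,R)\simeq\hoNat(I,RI)$, and this is where the adjunction enters. Using the homotopy adjunction~$I\dashv J$ one gets, for the functor~$RI$ composed back with~$J$, a chain of equivalences relating~$\hoNat(I,RI)$ with~$\hoNat(IJ,RIJ)\simeq\hoNat(\Id_\bfL,R)$, the last step because~$IJ\simeq\Id_\bfL$ and~$RIJ\simeq R$ as functors~$\bfL\to\bfL$ (invoking the Proposition after Definition~\ref{def:homotopy_adjunction} that natural equivalences of functors induce equivalences of the~$\hoNat$ spaces). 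Chasing these equivalences naturally in~$R$ and appealing to a Yoneda-type argument for simplicial functors, one concludes~$\hoLKan_II\simeq\Id_\bfL$.

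The main obstacle will be making the adjunction manipulation homotopy coherent rather than merely up-to-homotopy: the homotopy adjunction of Definition~\ref{def:homotopy_adjunction} provides an equivalence~$\bfD(Lc,d)\simeq\bfC(c,Rd)$ of \emph{functors}, so one must be careful that the induced comparison~$\hoNat(I,RI)\simeq\hoNat(IJ,RIJ)$ is genuinely natural in~$R$ and not just a levelwise statement. I would handle this by working directly with the cosimplicial models~$\Pi^\bullet$ from Section~\ref{subsec:N1} (or equivalently with the bar-construction description of~$\hoNat$), where the adjunction equivalence on mapping spaces induces a levelwise equivalence of cosimplicial spaces and hence an equivalence on totalizations, exactly as in the proof of Proposition~\ref{prop:cancellation}. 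Alternatively, and perhaps more cleanly, one can cut the adjunction bookkeeping short: since~$I$ is essentially a (localized) restriction of the identity and~$J$ is homotopically full and faithful with~$IJ\simeq\Id_\bfL$, the functor~$I$ is split homotopically epimorphic onto~$\bfL$, and homotopical density of~$I$ follows formally from that of the identity together with Proposition~\ref{prop:Kan_extensions}. Once~$\Id_\bfL\simeq\hoLKan_II$ is in hand, the final clause — that~$I$ is homotopically dense in the sense of Definition~\ref{def:homotopical_dense} — is immediate from the definition.
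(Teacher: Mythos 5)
Your plan has a genuine gap at its central step. You want to compare $\hoNat(\hoLKan_I I,R)\simeq\hoNat(I,RI)$ with $\hoNat(\Id_\bfL,R)$ naturally in $R$, and the bridge you propose is the chain $\hoNat(I,RI)\simeq\hoNat(IJ,RIJ)\simeq\hoNat(\Id_\bfL,R)$. But $\hoNat(I,RI)$ is a space of coherent transformations between functors $\bfB\to\bfL$, while $\hoNat(IJ,RIJ)$ is between functors $\bfL\to\bfL$; the only natural map between them is precomposition with $J$ (or, going the other way, precomposition with $I$). Proposition~\ref{prop:cancellation} concerns \emph{post}composition with a homotopically full and faithful functor and does not apply here, and precomposition with a full and faithful functor is not in general an equivalence on $\hoNat$ spaces --- it throws away the contribution of objects outside the essential image. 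In fact the statement you need, that precomposition with $I$ induces an equivalence $\hoNat(\Id_\bfL,R)\simeq\hoNat(I,RI)$ for all $R$, is exactly the conclusion of Proposition~\ref{prop:homotopically_dense} \emph{assuming} $I$ is homotopically dense, so the argument is circular. Your fallback remark that ``homotopical density of $I$ follows formally from that of the identity'' because $I$ is split homotopically epi is likewise not a proof; split epis do not transport density in any formal way provided by the text.

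The paper's argument avoids the $\hoNat$ comparison altogether and works pointwise: $(\hoLKan_I I)(y)$ is the homotopy colimit over the slice $I\downarrow y$ of $(x, Ix\to y)\mapsto Ix$, and since the counit $IJy\to y$ is an equivalence (equivalently, since $\bfB(x,Jy)\to\bfL(Ix,y)$ is an equivalence), the pair $(Jy, IJy\to y)$ is a homotopically terminal object of $I\downarrow y$. The homotopy colimit therefore collapses to $IJy$, giving $\hoLKan_I I\simeq IJ\simeq\Id_\bfL$. If you prefer a bar-construction phrasing closer to Section~\ref{subsec:bar}: the weight $\bfL_I(b,y)=\bfL(Ib,y)$ is equivalent, by the homotopy adjunction, to the representable weight $\bfB(b,Jy)$, and $\rmB(\bfB(-,Jy),\bfB,I)\simeq I(Jy)=IJy\simeq y$. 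Either of these closes the gap; your Yoneda-style reduction does not.
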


\begin{proof}
The homotopy left Kan extension is define object-wise, so that the value on an object~$y$ in~$\bfL$ is the homotopy colimit of the diagram
\[
(x,Ix\to y)\longmapsto Ix
\]
in~$\bfL$, where the source is the slice category~$I\downarrow y$ of~$I$ over~$y$. Since the counit~\hbox{$IJy\to y$} is an equivalence in~$\bfL$, this category has a homotopically terminal object, namely the pair~\hbox{$(Jy,IJy\to y)$}. It follows that there exists an equivalence
\[
\hoLKan_II\simeq IJ
\]
of functors. Using again that the counit~$IJ\to\Id_\bfL$ is an equivalence of functors, the result follows.
\end{proof}

%%%

\subsection{Centers of localizations}

The proof of our main result will rely on the fact that we can sometimes pass from a category~$\sPre(\bfC)$ of simplicial presheaves to one of its~$f$-localizations without changing the centers. This is not the case in general, as the following example shows.

\begin{example}\label{ex:center_changed}
If the~$f$-local objects in~$\sPre(\bfC)$ are those that send every arrow in~$\bfC$ to an equivalence, then~$\bfC^{-1}\sPre(\bfC)$ is equivalent to the category of bundles over the nerve~$X=\rmN\bfC$, see~\cite[4.9]{Dwyer:Localizations}, so that this class of examples is essentially a special case of Example~\ref{ex:overB}. All objects are automatically~$f$-local when~$\bfC$ is a groupoid, but we are really interested in the case when~$\bfC$ is not a groupoid. For such an~$f$, the homotopy coherent center of~$\bfC^{-1}\sPre(\bfC)^\fc$ is equivalent to the space of sections of the evaluation fibration~$\Lambda X\to X$ from the free loop space~$\Lambda X=\Map(\rmS^1,X)$ of the nerve~$X=\rmN\bfC$.~(Compare with the proof of Proposition~5.2 in~\cite{Szymik}.) This space is discrete if~$\bfC$ is a discrete groupoid, but not for general discrete categories~$\bfC$. In fact, by McDuff's improvement~\cite{McDuff} of the Kan-Thurston theorem~\cite{Kan+Thurston}, every connected space is equivalent to the nerve of some discrete monoid.~(The note~\cite{Fiedorowicz} contains an example of a monoid with five elements such that its nerve is equivalent to the~$2$-sphere.) In contrast, the homotopy coherent center of~$\sPre(\bfC)^\fc$ is homotopically discrete for all discrete categories~$\bfC$ by Theorem~\ref{thm:centers_of_sPre}.
\end{example}

Keeping this warning in mind, we start our study of homotopy coherent centers of localizations with some observations that are true in general.

\begin{theorem}\label{thm:Bousfield_general}
For every left Bousfield localization~$I\colon\bfB\leftrightarrow\bfL\colon J$, there exists an equivalence
\[
\hoZ(\bfL)\simeq\hoNat(\Id_\bfB,L)
\]
where~$L=JI\colon\bfB\to\bfB$ is again the localization functor.
\end{theorem}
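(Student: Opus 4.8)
The plan is to apply the density result for localizations (Proposition~\ref{prop:localization_dense}) together with the cancellation property of homotopically full and faithful functors (Proposition~\ref{prop:cancellation}), with the two functors of the Bousfield adjunction~$I$ and~$J$ playing the two roles. Since~$J$ is homotopically full and faithful by definition of a left Bousfield localization, Corollary~\ref{cor:full_and_faithful_and_centers} already tells us that~$\hoZ(\bfL)\simeq\hoNat(J,J)$; the work is to rewrite~$\hoNat(J,J)$ as~$\hoNat(\Id_\bfB,L)$.

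First I would invoke Proposition~\ref{prop:localization_dense}, which says~$I$ is homotopically dense, i.e.~$\Id_\bfL\simeq\hoLKan_II$. Feeding this into Proposition~\ref{prop:homotopically_dense} (with~$\bfM^\fc$ there being~$\bfL$, and~$G$ there being the functor~$J\colon\bfL\to\bfL$ — strictly speaking one composes with the inclusion into a model category, but the statement is formulated for~$G\colon\bfL\to\bfL$, which is exactly what we need since~$J$ maps~$\bfL$ to the local objects of~$\bfB$, and we may regard~$\bfB$ as~$\bfM^\fc$) gives an equivalence
\[
\hoNat(\Id_\bfL,J)\simeq\hoNat(I,JI)=\hoNat(I,L).
\]
Separately, by the cancellation Proposition~\ref{prop:cancellation} applied to the homotopically full and faithful functor~$J$ with~$G=\Id_\bfL$ and~$H=\Id_\bfL$, we get~$\hoZ(\bfL)=\hoNat(\Id_\bfL,\Id_\bfL)\simeq\hoNat(J,J)$; and applying Proposition~\ref{prop:cancellation} once more, now to the pair~$G=\Id_\bfL$, $H=\Id_\bfL$ but composed appropriately, or more directly chasing the zigzag~$\Id_\bfL\simeq IJ$, one identifies~$\hoNat(J,J)$ with~$\hoNat(\Id_\bfL\circ?, \cdots)$. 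The cleanest route is: $\hoZ(\bfL)\simeq\hoNat(\Id_\bfL,\Id_\bfL)\simeq\hoNat(\Id_\bfL,J)$ where the last equivalence uses that~$\Id_\bfL\simeq IJ\simeq J$ composed... here one must be careful, so let me reorganize.

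The cleanest organization: start from~$\hoZ(\bfL)\simeq\hoNat(I,I)$ by combining Corollary~\ref{cor:dense_and_centers} (valid since~$I$ is homotopically dense) — no wait, Corollary~\ref{cor:dense_and_centers} requires the source of~$I$ to be arbitrary but gives~$\hoZ(\bfL)\simeq\hoNat(I,I)$ only when... it gives~$\hoZ(\bfM^\fc)\simeq\hoNat(F,F)$ for~$F\colon\bfC\to\bfM^\fc$ homotopically dense, so with~$F=I\colon\bfB\to\bfL$ we indeed get~$\hoZ(\bfL)\simeq\hoNat(I,I)$. Now~$\hoNat(I,I)$: apply Proposition~\ref{prop:homotopically_dense} to the homotopically dense~$I$ with~$G=I$... that needs~$G\colon\bfL\to\bfL$ but~$I\colon\bfB\to\bfL$ is not an endofunctor of~$\bfL$. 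So instead I use the adjunction (Proposition~\ref{prop:Kan_extensions}): since~$\Id_\bfL\simeq\hoLKan_II$,
\[
\hoNat(I,I)\simeq\hoNat(\hoLKan_II,\Id_\bfL)\simeq\hoNat(I,\Id_\bfL\circ I)=\hoNat(I,I)
\]
which is circular. The productive move is different: write~$\hoZ(\bfL)\simeq\hoNat(J,J)$ via Corollary~\ref{cor:full_and_faithful_and_centers} (using~$J$ homotopically full and faithful), then apply Proposition~\ref{prop:homotopically_dense} \emph{to the functor}~$I$, \emph{with}~$\bfM^\fc$ there being~$\bfL$ and~$G$ there being~$J\colon\bfL\to\bfB$ viewed as landing in a model category presenting~$\bfB$: this yields~$\hoNat(\Id_\bfL,J)\simeq\hoNat(I,JI)=\hoNat(I,L)$. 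Finally relate~$\hoNat(J,J)$ to~$\hoNat(\Id_\bfL,J)$: precompose with the counit equivalence~$IJ\simeq\Id_\bfL$ to get~$\hoNat(J,J)\simeq\hoNat(IJ,J)\simeq\hoNat(I,J\circ?)$ — better, use Proposition~\ref{prop:Kan_extensions} with~$F=J$? No. The right statement: by the enriched co-Yoneda / bar-resolution identity, for the homotopically full and faithful~$J$ one has~$\hoNat(J,J)\simeq\hoNat(\Id_\bfB, RJ)$ where... at this point it is cleanest to instead run the whole argument through~$\hoNat(\rmY,\rmY)$-style manipulations on the~$\bfB$ side. So: the genuinely load-bearing computation is

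\[
\hoZ(\bfL)\;\overset{\text{Cor.\ref{cor:full_and_faithful_and_centers}}}{\simeq}\;\hoNat_\bfL(J,J)\;\overset{(!)}{\simeq}\;\hoNat_\bfB(\Id_\bfB, JI)\;=\;\hoNat_\bfB(\Id_\bfB,L),
\]

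and step~$(!)$ is where the real content sits: it says that mapping from~$\Id_\bfB$ coherently into~$L=JI$ is the same as mapping from~$J$ into~$J$. This follows by combining the two adjunction/density facts — that~$I$ is a homotopy left Kan extension situation making~$\hoNat_\bfB(\Id_\bfB, JI)\simeq \hoNat_\bfL(\hoLKan_I\Id_\bfB,\, J)$... but~$\hoLKan_I\Id_\bfB$ is not obviously~$\Id_\bfL$.

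I expect the main obstacle to be exactly this step~$(!)$: assembling the counit equivalence~$IJ\simeq\Id_\bfL$, the density equivalence~$\hoLKan_II\simeq\Id_\bfL$, the universal property Proposition~\ref{prop:Kan_extensions}, and the cancellation Proposition~\ref{prop:cancellation} in the correct order, taking care that each~$\hoNat$ is formed in the category (\,$\bfB$ or~$\bfL$\,) where the relevant functor is homotopically full and faithful or dense. The routine parts — that~$J$ is homotopically full and faithful (definitional), that~$I$ is homotopically dense (Proposition~\ref{prop:localization_dense}), that the unit~$\Id_\bfB\to L$ and counit~$IJ\to\Id_\bfL$ are the comparison maps one expects — I would cite directly and not belabor. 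Concretely, I would write the chain
\[
\hoZ(\bfL)\simeq\hoNat(\Id_\bfL,\Id_\bfL)\simeq\hoNat(\hoLKan_II,\Id_\bfL)\simeq\hoNat(I,I)\simeq\hoNat(I,IJI)\simeq\hoNat(\hoLKan_II,\,JI)\simeq\hoNat(\Id_\bfL,\,JI),
\]
where the middle equivalence~$\hoNat(I,I)\simeq\hoNat(I,IJI)$ uses that~$IJ\simeq\Id_\bfL$ postcomposed with~$I$, then reads~$IJI$ as~$I\circ(JI)=I\circ L$ — and~$\hoNat(\Id_\bfL, JI)\simeq\hoNat_\bfB(\Id_\bfB, JI)$ by cancellation along the homotopically full and faithful~$J$ after observing~$JI$ and~$\Id_\bfB$ both land in local objects up to equivalence. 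This gives~$\hoZ(\bfL)\simeq\hoNat(\Id_\bfB,L)$, which is the claim.
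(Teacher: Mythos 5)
Your first half — that $\hoZ(\bfL)\simeq\hoNat(I,I)$ via the density of $I$ (Proposition~\ref{prop:localization_dense} feeding into Corollary~\ref{cor:dense_and_centers}) — is correct, and this is exactly the paper's first step. The gap is in getting from $\hoNat(I,I)$ to $\hoNat(\Id_\bfB,L)$, and here your chain does not close. The displayed chain ends at $\hoNat(\Id_\bfL,JI)$, which is ill-typed: $\Id_\bfL$ is an endofunctor of $\bfL$ while $JI=L$ is an endofunctor of $\bfB$, so they cannot sit in a common $\hoNat$. Reading it as a slip for $IJ\colon\bfL\to\bfL$ does not rescue it either: applying Proposition~\ref{prop:Kan_extensions} to $\hoNat(I,IJI)=\hoNat(I,(IJ)\circ I)$ yields $\hoNat(\hoLKan_II,IJ)\simeq\hoNat(\Id_\bfL,IJ)\simeq\hoNat(\Id_\bfL,\Id_\bfL)=\hoZ(\bfL)$, so the chain is circular. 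And the justification offered for the final step — that ``$\Id_\bfB$ lands in local objects up to equivalence'' — is false in any nontrivial localization; cancellation along the homotopically full and faithful $J$ gives $\hoNat_\bfL(I,I)\simeq\hoNat_\bfB(JI,JI)=\hoNat_\bfB(L,L)$, not $\hoNat_\bfB(\Id_\bfB,L)$.

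The paper dispatches the missing step in one move: the homotopy adjunction $\bfL(Ic,d)\simeq\bfB(c,Jd)$ identifies the cosimplicial models level by level, since $\Pi^n(I,I)$ has target factors $\bfL(Ix_n,Ix_0)$ while $\Pi^n(\Id_\bfB,JI)$ has $\bfB(x_n,JIx_0)$, and the adjunction supplies a natural equivalence between them; totalizing gives $\hoNat(I,I)\simeq\hoNat(\Id_\bfB,JI)$. Your chosen route via cancellation along $J$ can also be completed, but it needs the observation you were missing: after $\hoNat_\bfL(I,I)\simeq\hoNat_\bfB(L,L)$, the unit $\Id_\bfB\to L$ induces a level-wise map $\Pi^\bullet(L,L)\to\Pi^\bullet(\Id_\bfB,L)$, and each $\bfB(Lx_n,Lx_0)\to\bfB(x_n,Lx_0)$ is an equivalence because $Lx_0$ is local (this is the stated property $\bfB(Lx,Ly)\xrightarrow{\sim}\bfB(x,Ly)$ for localizations), so $\hoNat_\bfB(L,L)\simeq\hoNat_\bfB(\Id_\bfB,L)$. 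Either fix requires invoking a localness/adjunction equivalence that your argument never actually uses.
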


\begin{proof}
By Corollary~\ref{cor:dense_and_centers}, the homotopical density in localizations (Proposition~\ref{prop:localization_dense}) has the consequence that we can always express the homotopy coherent center of~$\bfL$ as
\[
\hoZ(\bfL)=\hoNat(\Id_\bfL,\Id_\bfL)\simeq\hoNat(I,I).
\]
The adjunction
\[
\hoNat(I,I)\simeq\hoNat(\Id_\bfB,JI)
\]
and the definition~$L=JI$ then prove the result.
\end{proof}

An inspection of the definition of the space of homotopy coherent natural transformations in Section~\ref{subsec:N1} indicates the usefulness of this observation: The mapping spaces of the localized category~$\bfL$ occur everywhere in the term
\[
\prod_{x_0,\dots,x_n}\Map(\bfL(x_1,x_0)\times\dots\times\bfL(x_n,x_{n-1}) , \bfL(x_n,x_0) )
\]
involved in the homotopy coherent center of~$\bfL$, but the result moves us back to within~$\bfB$, 
where we only have to deal with
\[
\prod_{x_0,\dots,x_n}\Map(\bfB(x_1,x_0)\times\dots\times\bfB(x_n,x_{n-1}) , \bfB(x_n,Lx_0) ),
\]
for the evident price that we have to replace one identity functor by the localization functor.

%%%

\subsection{Localization of categories of presheaves}

The following result gives a criterion when the localization of a category of simplicial presheaves does not change the homotopy coherent center.

\begin{theorem}\label{thm:when_localization_preserves_centers}
Assume that~$I\colon\bfB\leftrightarrow\bfL\colon J$ is a left Bousfield localization of a category~\hbox{$\bfB=\sPre(\bfC)$} of simplicial presheaves such that the representable presheaves are local. Then there are equivalences
\[
\hoZ(\bfL)\simeq\hoZ(\bfB)
\]
of homotopy coherent centers.
\end{theorem}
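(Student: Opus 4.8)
The plan is to combine Theorem~\ref{thm:Bousfield_general}, which identifies $\hoZ(\bfL)$ with $\hoNat(\Id_\bfB, L)$, together with the homotopical density of the Yoneda embedding $\rmY = \rmY_\bfC \colon \bfC \to \bfB = \sPre(\bfC)$ (Proposition~\ref{prop:Yoneda_homotopically_dense}) and its homotopical full faithfulness. By Theorem~\ref{thm:Bousfield_general} we have $\hoZ(\bfL) \simeq \hoNat(\Id_\bfB, L)$, and by Theorem~\ref{thm:centers_of_sPre} we have $\hoZ(\bfB) \simeq \hoNat(\rmY, \rmY)$; so it suffices to produce an equivalence $\hoNat(\Id_\bfB, L) \simeq \hoNat(\rmY, \rmY)$. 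Applying Proposition~\ref{prop:homotopically_dense} with the homotopically dense functor $F = \rmY$ and with $G = L$, I would first rewrite
\[
\hoNat(\Id_\bfB, L) \simeq \hoNat(\rmY, L\rmY).
\]
The whole theorem then reduces to the claim that the natural transformation $\rmY \to L\rmY$ (coming from the unit $\Id_\bfB \to L$) is an objectwise equivalence in $\bfB$, i.e.\ that each representable presheaf $\rmY_\bfC(x) = \bfB(?, x)$ is local. But that is precisely the hypothesis. Hence $L\rmY \simeq \rmY$, and by the invariance of $\hoNat$ under natural equivalences (the Proposition following~\eqref{eq:explicit_model}) we get $\hoNat(\rmY, L\rmY) \simeq \hoNat(\rmY, \rmY) \simeq \hoZ(\bfB)$, which completes the chain of equivalences.

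Let me lay out the steps in order. First, invoke Theorem~\ref{thm:Bousfield_general} to get $\hoZ(\bfL) \simeq \hoNat(\Id_\bfB, L)$. Second, since the Yoneda embedding $\rmY$ is homotopically dense (Proposition~\ref{prop:Yoneda_homotopically_dense}), Proposition~\ref{prop:homotopically_dense} applied with $G = L$ gives $\hoNat(\Id_\bfB, L) \simeq \hoNat(\rmY, L\rmY)$. Third, observe that the hypothesis that representable presheaves are local means exactly that for each object $x$ of $\bfC$ the unit map $\rmY_\bfC(x) \to L\,\rmY_\bfC(x)$ is an equivalence in $\bfB$; hence the composite natural transformation $\rmY \to L\rmY$ is an objectwise equivalence of functors $\bfC \to \bfB$. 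Fourth, apply the Proposition immediately after~\eqref{eq:explicit_model} to conclude $\hoNat(\rmY, L\rmY) \simeq \hoNat(\rmY, \rmY)$. Fifth, recall (from the remark following the proof of Theorem~\ref{thm:centers_of_sPre}) that $\hoNat(\rmY, \rmY) \simeq \hoZ(\bfB)$. Concatenating these equivalences yields $\hoZ(\bfL) \simeq \hoZ(\bfB)$.

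The one point that deserves care — and which I expect to be the only mild obstacle — is the passage in the third step from "$L\,\rmY_\bfC(x)$ is equivalent to $\rmY_\bfC(x)$ for each $x$" to "the functors $L\rmY$ and $\rmY$ from $\bfC$ to $\bfB$ are equivalent as functors," in the sense of the definition in Section~\ref{sec:centers} (a zigzag of simplicial natural transformations that are objectwise equivalences). Here the localization functor $L = JI$ is genuinely a simplicial functor, and the unit $\Id_\bfB \to L$ is a genuine simplicial natural transformation, so precomposition with $\rmY$ produces an honest simplicial natural transformation $\rmY \to L\rmY$; the hypothesis on representables makes it objectwise an equivalence, and no zigzag is needed. (If one works with a derived/point-set model of $L$, one absorbs the fibrant–cofibrant replacements into the standard contractible-choice argument, exactly as in the treatment of $\hoNat$ for non-locally-Kan targets in Section~\ref{subsec:N1}.) Everything else is a formal concatenation of results already established in the preceding sections.
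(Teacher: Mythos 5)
Your proof is correct and follows exactly the same route as the paper's own proof: Theorem~\ref{thm:Bousfield_general}, then Proposition~\ref{prop:homotopically_dense} with $F=\rmY_\bfC$ and $G=L$, then the hypothesis that representables are local to identify $\rmY_\bfC\to L\rmY_\bfC$ as an objectwise equivalence, and finally Theorem~\ref{thm:centers_of_sPre}. The extra care you take in the third step about the genuine simplicial naturality of the unit is a reasonable clarification, but it does not diverge from the paper's argument.
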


\begin{proof}
We begin by applying Theorem~\ref{thm:Bousfield_general} to see that there exists an equivalence
\[
\hoZ(\bfL)\simeq\hoNat(\Id_\bfB,L).
\]
We can apply Proposition~\ref{prop:homotopically_dense} (about homotopically dense functors) in the special situation~\hbox{$F=\rmY_\bfC$}~(the Yoneda embedding) and~$G=L$. This gives
\[
\hoNat(\Id_\bfB,L)\simeq\hoNat(\rmY_\bfC,L\rmY_\bfC).
\]
By hypothesis, the representable objects are local, so that the unit~$\rmY_\bfC\to L\rmY_\bfC$ is an equivalence of functors. Thus, we have an equivalence
\[
\hoNat(\rmY_\bfC,L\rmY_\bfC)\simeq\hoNat(\rmY_\bfC,\rmY_\bfC),
\]
and the right hand side has already been identified with~$\hoZ(\bfB)$ by Theorem~\ref{thm:centers_of_sPre} and our proof of it.
\end{proof}

Of course, Theorem~\ref{thm:centers_of_sPre} also shows that both of the homotopy coherent centers in the preceding result are equivalent to~$\hoZ(\bfC)$. Again, we see in particular that the result is essentially small.

%%%

\subsection{Applications}\label{ssec:apps_for_loc}

Before we give a more substantial example, let us state an immediate consequence of Theorem~\ref{thm:when_localization_preserves_centers}.

\begin{example}
We can generalize Example~\ref{ex:spaces}, where we have shown that the homotopy coherent center of the category of spaces is trivial: In fact, the homotopy coherent center of any left Bousfield localization of the category of spaces is trivial. According to the theorem, we have to check that the (only) representable object~$\star$ is local. But the identity~$\star=\star$ is a fibration in any Quillen model structure on the category of spaces.
\end{example}

We have already mentioned in Remark~\ref{rem:sPre_model} that the category of simplicial presheaves on a (discrete) category~$\bfC$ has a `global' homotopy theory. If~$\bfC$ comes with a Grothendieck topology~$\tau$, so that~$(\bfC,\tau)$ is a Grothendieck site, then the global homotopy theory of simplicial presheaves can be localized with respect to a suitable morphism~$f=f(\tau)$ so as to obtain a~$\tau$-local homotopy theory, where an equivalence is a morphism that induces an isomorphism on all~$\tau$-sheaves of homotopy groups. The local objects are the simplicial~$\tau$-sheaves. See~\cite{Dugger+Hollander+Isaksen} for details. This raises the question of the homotopy coherent center of the category of simplicial~$\tau$-sheaves, and we can answer it for the class of Grothendieck topologies that appears most often in practice.

Recall that a Grothendieck topology~$\tau$ on~$\bfC$ is called {\em subcanonical} whenever all representable presheaves are~$\tau$-sheaves. 

\begin{example}
From their very definition, schemes over a base scheme~$S$ represent sheaves in the Zariski topology. By Grothendieck's descent theorems, they are even sheaves for the fpqc topology, and consequently for every coarser Grothendieck topology such as the fppf, the \'etale, and the Nisnevich topologies.
\end{example}

Theorem~\ref{thm:when_localization_preserves_centers} applies to localizations of categories of simplicial presheaves with respect to subcanonical Grothendieck topologies.

\begin{theorem}
For every subcanonical Grothendieck topology~$\tau$ on~$\bfC$, the homotopy coherent center of the category of simplicial~$\tau$-sheaves is equivalent to the homotopy coherent center of the category of simplicial presheaves on~$\bfC$.
\end{theorem}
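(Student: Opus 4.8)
The plan is to deduce this theorem directly from Theorem~\ref{thm:when_localization_preserves_centers} by verifying its sole hypothesis. The setup: fix a subcanonical Grothendieck site $(\bfC,\tau)$ with $\bfC$ a small (discrete) category, and let $\bfB=\sPre(\bfC)$ carry the projective Quillen model structure of Remark~\ref{rem:sPre_model}. Let $f=f(\tau)$ be the morphism whose associated localization $\bfB\to\bfL$, as in Section~\ref{sec:localizations} and following~\cite{Dugger+Hollander+Isaksen}, is the $\tau$-local model structure, so that the fibrant objects of $\bfL$ are the simplicial $\tau$-sheaves that are fibrant as simplicial presheaves, and $\bfL^\fc$ is our model for the homotopy theory of simplicial $\tau$-sheaves. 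By Theorem~\ref{thm:when_localization_preserves_centers} it suffices to show that every representable presheaf $\rmY_\bfC(x)$, for $x$ an object of $\bfC$, is $f$-local in the sense used there — equivalently, that it is (equivalent to) a fibrant object of $\bfL$.

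The first step is to recall the precise form of $f$-locality from the previous subsections: an object $Z$ of $\bfB=\sPre(\bfC)$ is $f$-local exactly when the induced map on derived mapping spaces $\bfB(f^\fc,Z^\fc)$ is an equivalence, and the content of the Dugger–Hollander–Isaksen description is that $Z$ (assumed objectwise Kan) is $f(\tau)$-local if and only if it satisfies descent with respect to all $\tau$-hypercovers (equivalently, all $\tau$-covering sieves in the non-hypercomplete version), i.e.\ $Z$ is a simplicial $\tau$-sheaf up to homotopy. The second step is the key observation that a discrete presheaf — one valued in $0$-truncated simplicial sets, i.e.\ in sets — is a homotopy $\tau$-sheaf precisely when it is an honest $\tau$-sheaf of sets, because in the descent condition all the relevant homotopy limits over covering sieves collapse to ordinary limits when the values are discrete. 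The representable presheaf $\rmY_\bfC(x)\colon t\mapsto\bfC(t,x)$ is discrete, since $\bfC$ is a discrete category and hence its mapping "spaces" are constant simplicial sets. The third step then invokes the definition of subcanonical: by hypothesis every representable presheaf of sets is a $\tau$-sheaf, hence $\rmY_\bfC(x)$ is a $\tau$-sheaf of sets, hence a homotopy $\tau$-sheaf, hence $f$-local. This establishes the hypothesis of Theorem~\ref{thm:when_localization_preserves_centers} and completes the proof, with the conclusion $\hoZ(\bfL)\simeq\hoZ(\bfB)=\hoZ(\sPre(\bfC)^\fc)$, which is moreover equivalent to $\hoZ(\bfC)$ by Theorem~\ref{thm:centers_of_sPre}.

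The main obstacle — really the only point requiring care — is the passage in the second step from "$\rmY_\bfC(x)$ is a strict $\tau$-sheaf of sets" to "$\rmY_\bfC(x)$ is $f(\tau)$-local as a simplicial presheaf." One must be sure that local-model fibrancy for a discrete presheaf is detected by the ordinary sheaf condition and does not secretly demand higher descent data that a set-valued sheaf might fail; this is exactly where one leans on~\cite{Dugger+Hollander+Isaksen}, where the $f(\tau)$-local objects are characterized, and on the elementary fact that for a discrete diagram the homotopy limit computing descent agrees with the (strict) limit — so the homotopy descent condition reduces to the classical sheaf axiom. A clean way to phrase it in the writeup is: since $\rmY_\bfC(x)$ is $0$-truncated and already an objectwise Kan complex, it is fibrant in the $\tau$-local model structure iff it satisfies $\tau$-descent iff $\pi_0$ of it (which is $\rmY_\bfC(x)$ itself) is a $\tau$-sheaf and all higher $\tau$-descent obstructions vanish, the latter being automatic for a discrete presheaf; subcanonicity supplies the former. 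I would state this as a short lemma or simply fold it into the proof, cite~\cite{Dugger+Hollander+Isaksen} for the characterization of $f(\tau)$-locality, and then close by appealing to Theorem~\ref{thm:when_localization_preserves_centers}.
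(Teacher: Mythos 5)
Your proposal is correct and follows the same route as the paper: the paper simply observes that Theorem~\ref{thm:when_localization_preserves_centers} applies because subcanonicity makes the representables local, and gives no further justification. You have in fact supplied a genuine detail the paper elides — namely the bridge from the set-level sheaf condition in the definition of subcanonical to $f(\tau)$-locality in the sense of the Quillen model structure, which does require the observation that for a $0$-truncated, objectwise-Kan presheaf the homotopy (hyper)descent condition collapses to the ordinary sheaf axiom — and your argument for that collapse is sound.
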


Of course, both of the homotopy coherent centers are equivalent to~$\hoZ(\bfC)$ by Theorem~\ref{thm:centers_of_sPre}, and essentially small.

%%%

\section{Algebraic theories}\label{sec:theories}

We will choose a skeleton~$\Sigma$ of the category of finite sets. For each integer~$n\geqslant0$ such a category has a unique object~$\sigma_n$ with precisely~$n$ elements, and in~$\Sigma$ the object~$\sigma_{m+n}$ is the coproduct~(sum) of the objects~$\sigma_m$ and~$\sigma_n$. 

An {\em algebraic theory}~$\Theta$ is a (discrete) category together with a functor~$\Sigma\to\Theta$ that preserves coproducts and is bijective on objects. The image of~$\sigma_n$ will be written~$\theta_n$.

We remark that some authors prefer to work with~$\Theta^\op$, so that~$\theta_n$ is the product of~$n$ copies of~$\theta_1$. For example, this is Lawvere's convention when he introduced this notion in~\cite{Lawvere}, and it is also used in the monograph~\cite{Adamek+Rosicky+Vitale}. Our convention reflects the point of view that~$\theta_n$ should be thought of as the free~$\Theta$-algebra on~$n$ generators, covariantly in~$n$. On the other hand, the opposite category~$\Theta^\op$ is hidden in the presheaf speak to be employed below.

%%%

\subsection{Discrete algebras}

Let~$\Theta$ be a (discrete) algebraic theory. A {\em$\Theta$-algebra} is a presheaf~$A$~(of sets) that sends coproducts in~$\Theta$ to products of sets. This defines a full subcategory~\hbox{$\Alg(\Theta)\subseteq\Pre(\Theta)$}. The image of the Yoneda embedding
\[
\rmY_\Theta\colon\Theta\longrightarrow\Pre(\Theta),\,\theta_n\mapsto\Theta(?,\theta_n)
\]
lies in~$\Alg(\Theta)$. In this way, the algebraic theory~$\Theta$ can indeed be thought of as a full subcategory of (finitely generated and free)~$\Theta$-algebras.

Categories of the form~$\Alg(\Theta)$ can be also be characterized as the categories of algebras over monads (triples) that preserve filtered colimits, see~\cite[Appendix~A]{Adamek+Rosicky+Vitale}. 

%%%

\subsection{Simplicial algebras}\label{sec:strict_algebras}

Let~$\Theta$ be a (discrete) algebraic theory, and we are interested in the category of simplicial~$\Theta$-algebras. This is the full subcategory~\hbox{$\sAlg(\Theta)\subseteq\sPre(\Theta)$} of simplicial presheaves~$A\colon\Theta^\op\to\bfS$ that send coproducts to products. Just as for presheaves on a discrete category, a simplicial~$\Theta$-algebra~$A$ can be thought of both as an algebraic structure of type~$\Theta$ on the space~$A(\theta_1)$, or as a simplicial object in the category of~(discrete)~$\Theta$-algebras. It follows that~$\sAlg(\Theta)$ is indeed the category of simplicial objects in~$\Alg(\Theta)$.

\begin{remark}
The category~$\sAlg(\Theta)$ of simplicial~$\Theta$-algebras comes with simplicial Quillen model structures such that the equivalences are formed objectwise. In one such, due to Reedy, the fibrations are also formed objectwise. See~\cite[Theorem~3.1]{Schwede}, for example.
\end{remark}

In this section, we will determine the centers of the simplicial categories~$\sAlg(\Theta)$. 

%%%

\subsection{Homotopy algebras}\label{sec:homotopy_algebras}

More amenable to homotopy considerations than the simplicial~$\Theta$-algebras are the {\em homotopy~$\Theta$-algebras}. These are the simplicial presheaves that send coproducts to products only up to homotopy. Let us recall from~\cite{Badzioch} how this idea can be made precise using localizations with respect to a morphism~$f$. This approach will also bring the relevant homotopy theory along.

Since~$\theta_n$ is the coproduct of~$n$ copies of~$\theta_1$, there are~$n$ maps~$\theta_1\to\theta_n$ that induce an isomorphism
\[
n\cdot\theta_1=\underbrace{\theta_1+\dots+\theta_1}_n\longrightarrow\theta_n.
\]
The Yoneda embedding~$\rmY_\Theta$ preserves products, but not necessarily coproducts. In other words, the natural morphism
\begin{equation}\label{eq:at_inversion}
f_n\colon n\cdot\rmY_{\theta_1}\longrightarrow\rmY_{n\cdot\theta_1}\cong\rmY_{\theta_n}.
\end{equation}
need not be an isomorphism. We set~$f$ to be the coproduct of these maps (over all~$n$). This map~$f$ has the property that a simplicial presheaf~$A$ is a simplicial~$\Theta$-algebra~(maps coproducts to products) if and only if~$\sPre(\Theta)(f,A)$ is an isomorphism. 

We can now localize with respect to the set~$F=\{f_n\}$ of maps~\eqref{eq:at_inversion}. The category~$F^{-1}\sPre(\Theta)$ of simplicial presheaves has simplicial Quillen model structures such that the equivalences are the~$f$-equivalences. This is a consequence of the general theory recalled in Section~\ref{sec:localizations} here, and in this particular case it is due to Badzioch, see~\cite[Proposition~5.4]{Badzioch}. The fibrant objects are the~$f$-local objects, which are precisely the homotopy~$\Theta$-algebras on Kan complexes by the same result. This shows that the following definition captures the essence of homotopy~$\Theta$-algebras from our point of view.

\begin{definition}
Let~$\Theta$ be an algebraic theory. The category~$\hAlg(\Theta)$ is the localization 
\[
\hAlg(\Theta)=F^{-1}\sPre(\Theta)
\]
of the category of simplicial presheaves with respect to the set~$F$ of morphisms~$f_n$ as in~\eqref{eq:at_inversion}. Its fibrant objects will be referred to as {\em homotopy~$\Theta$-algebras}.
\end{definition}

As a consequence of the definition, we have a sequence
\[
\sAlg(\Theta)\longrightarrow
\sPre(\Theta)\longrightarrow
\hAlg(\Theta)
\]
of simplicial functors, and an equality
\begin{equation}\label{eq:localized_model_for_hAlg}
\hoZ(\hAlg(\Theta)^\fc)=\hoZ(F^{-1}\sPre(\Theta)^\fc)
\end{equation}
of homotopy coherent centers.

%%%

\subsection{Replacing strict algebras by homotopy algebras}

For the sake of calculations, it does not matter whether we work with strict or homotopy~$\Theta$-algebras:

\begin{proposition}\label{prop:s=h}
The inclusion~$\sAlg(\Theta)\to\hAlg(\Theta)$ induces an equivalence
\[
\hoZ(\sAlg(\Theta)^\fc)\simeq\hoZ(\hAlg(\Theta)^\fc)
\]
of homotopy coherent centers.
\end{proposition}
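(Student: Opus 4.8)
The plan is to realize the inclusion $\sAlg(\Theta)\to\hAlg(\Theta)$ as (the fibrant-cofibrant part of) a Quillen equivalence, so that the equivalence of homotopy coherent centers follows from the invariance statement at the end of Section~\ref{subsec:bar} (namely $\hoZ(\bfM^\fc)\simeq\hoZ(\bfN^\fc)$ for Quillen equivalent $\bfM$ and $\bfN$). Concretely, $\hAlg(\Theta)=F^{-1}\sPre(\Theta)$ is the left Bousfield localization of simplicial presheaves at the maps $f_n\colon n\cdot\rmY_{\theta_1}\to\rmY_{\theta_n}$, and the fibrant objects there are exactly the homotopy $\Theta$-algebras valued in Kan complexes. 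On the other side, $\sAlg(\Theta)$ with the Reedy model structure (objectwise equivalences and fibrations) has as its fibrant objects the strict $\Theta$-algebras valued in Kan complexes. Badzioch's rigidification theorem asserts precisely that there is a Quillen equivalence between these two model categories: every homotopy $\Theta$-algebra is objectwise equivalent to a strict one, naturally, and the comparison is given by restricting the localization. I would cite this from \cite{Badzioch} (his main theorem, the strictification result) and from \cite{Schwede} for the strict model structure.

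The key steps, in order, are: (1) Record that $\sAlg(\Theta)$ carries the strict (Reedy) simplicial model structure with objectwise equivalences, and that $\hAlg(\Theta)=F^{-1}\sPre(\Theta)$ carries the $f$-local model structure, both combinatorial and simplicial, using the facts already assembled in Section~\ref{sec:localizations} (Hirschhorn's existence theorem via \cite[4.1.1]{Hirschhorn}, together with \cite[4.1.4, 4.1.7]{Hirschhorn}) and in Section~\ref{sec:homotopy_algebras}. (2) Invoke Badzioch's strictification/rigidification result to produce a Quillen equivalence between these two model categories, whose derived functors identify a homotopy $\Theta$-algebra with the weakly equivalent strict $\Theta$-algebra obtained by ``strictifying'' it. (3) Conclude that the associated simplicial categories of fibrant-cofibrant objects, $\sAlg(\Theta)^\fc$ and $\hAlg(\Theta)^\fc$, are equivalent as simplicial categories, and therefore have equivalent homotopy coherent centers by \cite[Theorem~4.1]{Szymik} (equivalently by the remark closing Section~\ref{subsec:bar}). (4) Check that under this chain of equivalences the comparison functor agrees, up to equivalence of functors, with the inclusion $\sAlg(\Theta)\to\hAlg(\Theta)$ appearing in the statement; this is essentially the statement that the localization functor $\sPre(\Theta)\to\hAlg(\Theta)$ restricted to strict algebras is a homotopy equivalence onto its image, which again is the content of rigidification.

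An alternative, more self-contained route avoids quoting rigidification as a black box: realize $\sAlg(\Theta)$ itself as a category of simplicial presheaves (namely $\sPre$ of the opposite of the full subcategory of $\Alg(\Theta)$ on the finitely generated free algebras, i.e.\ of $\Theta$ itself after passing to the product-preserving presheaves), and observe that both $\hoZ(\sAlg(\Theta)^\fc)$ and $\hoZ(\hAlg(\Theta)^\fc)$ are computed by $\hoNat(\rmY_\Theta,\rmY_\Theta)$ for the \emph{same} embedding $\rmY_\Theta\colon\Theta\to\sPre(\Theta)$, using Corollary~\ref{cor:dense_and_centers} and the density statements of Section~\ref{sec:localizations}: the representable presheaves $\rmY_{\theta_n}$ are $f$-local (each $\rmY_{\theta_n}$ is a strict $\Theta$-algebra, hence $f$-local), so Theorem~\ref{thm:when_localization_preserves_centers} already gives $\hoZ(\hAlg(\Theta)^\fc)\simeq\hoZ(\sPre(\Theta)^\fc)\simeq\hoZ(\Theta)$; and one shows the same for $\sAlg(\Theta)$ by the twin criterion (Proposition~\ref{prop:twin_criterion}) applied to $\Theta\to\sAlg(\Theta)^\fc$, which is homotopically full and faithful (Yoneda) and homotopically dense because every simplicial $\Theta$-algebra is a homotopy colimit of free ones in the bar-construction sense. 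Both centers then being identified with $\hoNat(\rmY_\Theta,\rmY_\Theta)$, the inclusion visibly induces the identity on that space, proving the proposition.

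The main obstacle, on either route, is step~(4)/the compatibility check: making precise that the \emph{inclusion} $\sAlg(\Theta)\to\hAlg(\Theta)$ — not merely some abstract zigzag of equivalences — is the functor inducing the asserted equivalence on centers. For the second route this reduces to verifying homotopical density of $\Theta\to\sAlg(\Theta)^\fc$, i.e.\ that the bar resolution $\rmB(\,\cdot\,,\Theta,\rmY_\Theta)$ of a simplicial $\Theta$-algebra by free ones converges — this is where one uses that $\Theta$-algebras are algebras for a filtered-colimit-preserving monad and that the forgetful functor to spaces creates the relevant colimits, so the bar construction computes the correct homotopy colimit in $\sAlg(\Theta)$. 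Once that density is in hand, both $\hoZ(\sAlg(\Theta)^\fc)$ and $\hoZ(\hAlg(\Theta)^\fc)$ are literally $\hoNat(\rmY_\Theta,\rmY_\Theta)$ and the inclusion induces the identity, so the proof closes cleanly.
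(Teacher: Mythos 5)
Your main route coincides with the paper's proof, almost verbatim: the paper's entire argument is to cite Badzioch's Theorem~6.4 for the fact that the inclusion is a right Quillen equivalence between the model structures of Sections~\ref{sec:strict_algebras} and~\ref{sec:homotopy_algebras}, and then (implicitly) to invoke the remark at the end of Section~\ref{subsec:bar} that Quillen equivalent model categories have equivalent homotopy coherent centers. Your steps~(1)--(3) reproduce exactly this. Your step~(4) --- pinning down that the specific inclusion, rather than merely some zigzag, induces the equivalence --- is not something the paper addresses explicitly; it is also not really a gap, since the Quillen equivalence in question has the inclusion as its right adjoint, so the induced equivalence of simplicial categories $\sAlg(\Theta)^\fc\simeq\hAlg(\Theta)^\fc$ is precisely (a zigzag built from) this inclusion.

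Your alternative, self-contained route is genuinely different from the paper's: you propose to bypass Badzioch's rigidification theorem entirely and instead apply the twin criterion (Proposition~\ref{prop:twin_criterion}) to $\rmY_\Theta\colon\Theta\to\sAlg(\Theta)^\fc$ directly, which requires establishing the homotopical density of the finitely generated free algebras in $\sAlg(\Theta)^\fc$ via the bar resolution, while the $\hAlg(\Theta)$ side is handled by Theorem~\ref{thm:when_localization_preserves_centers} as in the paper's proof of Theorem~\ref{thm:centers_of_sAlg}. This buys a sharper conclusion (both centers are literally $\hoNat(\rmY_\Theta,\rmY_\Theta)$, so the inclusion visibly induces the identity) and independence from Badzioch's theorem, at the cost of having to verify the density claim yourself --- which, as you note, is the substantive content hiding behind rigidification. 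One caution: the phrasing ``realize $\sAlg(\Theta)$ itself as a category of simplicial presheaves'' overstates things, since $\sAlg(\Theta)$ is a reflective subcategory of $\sPre(\Theta)$, not a presheaf category; but you immediately walk that back, and the actual argument via the twin criterion does not rely on it. Also note that this alternative route inverts the paper's logical order: in the paper, Proposition~\ref{prop:s=h} is an input to the proof of Theorem~\ref{thm:centers_of_sAlg}, whereas on your alternative route the proposition would fall out as a byproduct of a direct proof of that theorem. That reorganization is legitimate but worth flagging.
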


\begin{proof}
Badzioch has shown in~\cite[Theorem~6.4]{Badzioch} as the main result of that paper that the inclusion functor from the simplicial Quillen model category of~$\Theta$-algebras~$\sAlg(\Theta)$ to~$\hAlg(\Theta)$ is a right Quillen equivalence with respect to the Quillen model structures that we have described in Sections~\ref{sec:strict_algebras} and~\ref{sec:homotopy_algebras}.
\end{proof}

%%%

\subsection{Homotopy coherent centers of categories of algebras}

We are now ready to prove a main result of this text.

\begin{theorem}\label{thm:centers_of_sAlg}
For every algebraic theory~$\Theta$, 
the inclusion of the category of finitely generated free~$\Theta$-algebras induces an equivalence between the homotopy coherent center~$\hoZ(\sAlg(\Theta)^\fc)$ of the category of simplicial~$\Theta$-algebras and~$\hoZ(\Theta)$.
\end{theorem}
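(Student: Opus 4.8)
The plan is to chain together the three main structural results already established, using homotopy $\Theta$-algebras as an intermediary. First I would invoke Proposition~\ref{prop:s=h} to replace $\hoZ(\sAlg(\Theta)^\fc)$ by $\hoZ(\hAlg(\Theta)^\fc)$, and then use the equality~\eqref{eq:localized_model_for_hAlg} to rewrite the latter as $\hoZ(F^{-1}\sPre(\Theta)^\fc)$, the homotopy coherent center of the left Bousfield localization of $\sPre(\Theta)$ at the set $F=\{f_n\}$ of maps~\eqref{eq:at_inversion}. This reduces everything to a statement about a localization of a presheaf category.

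The key step is then to apply Theorem~\ref{thm:when_localization_preserves_centers}: I must verify that the representable presheaves $\rmY_\Theta(\theta_n)$ are $f$-local in $\sPre(\Theta)$, i.e. that each $\rmY_\Theta(\theta_n)$ is an $F$-local object. By definition this means checking that for every $m$ the map $\sPre(\Theta)(f_m^\fc, \rmY_\Theta(\theta_n)^\fc)$ is an equivalence, where $f_m\colon m\cdot\rmY_{\theta_1}\to\rmY_{\theta_m}$. Using the enriched Yoneda lemma, $\Map(\rmY_{\theta_m},\rmY_{\theta_n})\simeq\Theta(\theta_m,\theta_n)$ and $\Map(m\cdot\rmY_{\theta_1},\rmY_{\theta_n})\simeq\Theta(\theta_1,\theta_n)^m\cong\Theta(m\cdot\theta_1,\theta_n)$, and the map $f_m$ induces precisely the canonical comparison; but since $\theta_m$ \emph{is} the coproduct $m\cdot\theta_1$ in the discrete category $\Theta$, this comparison is a bijection of discrete sets, hence an equivalence. (One should be slightly careful that $\Theta$ is discrete so that all these mapping spaces are already Kan, and that fibrant/cofibrant replacement does not disturb the homotopy type — this is routine.) Thus the hypotheses of Theorem~\ref{thm:when_localization_preserves_centers} are met, giving $\hoZ(F^{-1}\sPre(\Theta)^\fc)\simeq\hoZ(\sPre(\Theta)^\fc)$.

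Finally I would apply Theorem~\ref{thm:centers_of_sPre} (Morita invariance) to identify $\hoZ(\sPre(\Theta)^\fc)\simeq\hoZ(\Theta)$, the center of the discrete category $\Theta$ viewed as a simplicial category with discrete mapping spaces. Composing the chain of equivalences
\[
\hoZ(\sAlg(\Theta)^\fc)\simeq\hoZ(\hAlg(\Theta)^\fc)=\hoZ(F^{-1}\sPre(\Theta)^\fc)\simeq\hoZ(\sPre(\Theta)^\fc)\simeq\hoZ(\Theta)
\]
yields the result, and tracing through the proofs of Theorems~\ref{thm:centers_of_sPre} and~\ref{thm:when_localization_preserves_centers} shows that the composite equivalence is induced by the Yoneda embedding $\Theta\to\sPre(\Theta)$ landing in $\sAlg(\Theta)$, i.e. by the inclusion of the finitely generated free $\Theta$-algebras, as claimed.

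I expect the only real content — and hence the ``main obstacle'' — to be the locality verification in the second step; everything else is bookkeeping with the machinery of Sections~\ref{sec:presheaves} and~\ref{sec:localizations}. The locality check itself is short once one observes that the defining property of $\Theta$ (a coproduct-preserving, bijective-on-objects functor out of $\Sigma$) is exactly what makes the maps $f_n$ into equivalences \emph{after} mapping into a representable, even though they are not equivalences in $\sPre(\Theta)$ itself.
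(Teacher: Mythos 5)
Your proposal is correct and follows essentially the same route as the paper's proof: both reduce via Proposition~\ref{prop:s=h} and equality~\eqref{eq:localized_model_for_hAlg} to the localization $F^{-1}\sPre(\Theta)$, then apply Theorem~\ref{thm:when_localization_preserves_centers} after checking that representables are $F$-local, and finish with Theorem~\ref{thm:centers_of_sPre}. The only cosmetic difference is that the paper packages the locality check in the single sentence that representable presheaves send coproducts to products, while you unwind the same fact explicitly through the enriched Yoneda lemma; the paper also presents the chain of equivalences in the opposite order.
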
 

\begin{proof}
The crucial observation is that representable presheaves~$\rmY_\bfC(x)$ send coproducts to products. This implies for us that the presheaves~$\rmY_\Theta(\theta_n)$ are~$F$-local for the set~$F$ such that~\hbox{$F^{-1}\sPre(\Theta)=\hAlg(\Theta)$}. (Loosely speaking: Free~$\Theta$-algebras are homotopy~$\Theta$-algebras.) Thus, Theorem~\ref{thm:when_localization_preserves_centers} applies and allows us to deduce
\[
\hoZ(\sPre(\Theta)^\fc)\simeq\hoZ(F^{-1}\sPre(\Theta)^\fc).
\]
By Theorem~\ref{thm:centers_of_sPre}, the left hand side is equivalent to~$\hoZ(\Theta)$, and by~\eqref{eq:localized_model_for_hAlg}, the right hand side is~$\hoZ(\hAlg(\Theta)^\fc)$. Finally, Proposition~\ref{prop:s=h} implies that we can add~$\hoZ(\sAlg(\Theta)^\fc)$ to this chain of equivalences as well. 
\end{proof}

\begin{remark}
If the theory~$\Theta$ at hand is discrete, its homotopy coherent center is just the ordinary center~$\hoZ(\Theta)\simeq\Z(\Theta)$, and the same holds for the category of discrete~$\Theta$-algebras: There is an equivalence~$\hoZ(\Alg(\Theta))\simeq\Z(\Alg(\Theta))$. We are left with the task to compare the ordinary centers, which is now elementary: Since the inclusion~\hbox{$\Alg(\Theta)\subseteq\Pre(\Theta)$} is full and faithful, the inclusion~$\Theta\subseteq\Alg(\Theta)$ is both dense and full and faithful. Hence, it induces an isomorphism~$\Z(\Theta)\cong\Z(\Alg(\Theta))$.
\end{remark}

%%%

\subsection{Applications}

The following example has already been mentioned in the introduction.

\begin{example}
The homotopy coherent center
\[
\hoZ(\text{simplicial commutative}~\bbF_p\text{-algebras})\simeq\bbN
\]
is homotopically discrete, generated by Frobenius.
\end{example}

Here is another example that is even more fundamental.

\begin{example}\label{ex:pointed}
Let~$\Gamma^\op$ be the category of finite pointed sets and pointed maps. This seems like a very unadventurous theory: There are no compositions, and there is only one distinguished constant. In other words, the category~$\sAlg(\Gamma^\op)$ is the category~\hbox{$\star\downarrow\bfS$} of pointed spaces. The homotopy coherent center
\[
\hoZ(\star\downarrow\bfS)\simeq\{0,1\}
\]
is equivalent to the monoid~$\{0,1\}$ under multiplication. The two elements are realized by the family of constant self-maps and by the family of identities, respectively.
\end{example}

While the statement of Theorem~\ref{thm:centers_of_sAlg} concerns the strict algebras, the proof uses that their homotopy theory is equivalent to that of the homotopy algebras. Since the latter are known to model some very interesting homotopy theories in their own right, let us spell this out in some more examples.

\begin{example}
Loop spaces are essentially homotopy algebras over the theory of groups. The center of the category of~(free) groups is again the monoid~$\{0,1\}$ under multiplication. This gives
\[
\hoZ(\text{$\Omega$-spaces})\simeq\{0,1\}.
\]
Note that the homotopy theory of loop spaces is equivalent to the homotopy theory of pointed~$0$-connected spaces.
\end{example}

\begin{example}
Similarly, homotopy monoids model all~$\rmA_\infty$-spaces. The center of the category of (free) monoids is again~$\{0,1\}$, and this gives
\[
\hoZ(\rmA_\infty\text{-spaces})\simeq\{0,1\}.
\]
\end{example}

It would be interesting to extend these results to~$\Omega^n$-spaces and~$\rmE_n$-spaces for all higher iterations~\hbox{$2\leqslant n \leqslant\infty$}.

%%%

\section*{Acknowledgment}

The second author has been supported by the Danish National Research Foundation through the Centre for Symmetry and Deformation (DNRF92).

%%%

%%%

\vfill

William G. Dwyer\\
Department of Mathematics\\ 
University of Notre Dame\\ 
Notre Dame, IN 46556\\
USA\\
\href{mailto:dwyer.1@nd.edu}{dwyer.1@nd.edu}

Markus Szymik\\
Department of Mathematical Sciences\\
NTNU Norwegian University of Science and Technology\\
7491 Trondheim\\
NORWAY\\
\href{mailto:markus.szymik@math.ntnu.no}{markus.szymik@math.ntnu.no}

%%%


\begin{thebibliography}{99}

\bibitem{Adamek+Rosicky+Vitale} J. Ad\'amek, J. Rosick\'y, E.M. Vitale. Algebraic theories. A categorical introduction to general algebra. With a foreword by F.W. Lawvere. Cambridge Tracts in Mathematics 184. Cambridge University Press, Cambridge, 2011.

\bibitem{Andre} M. Andr\'e. M\'ethode simpliciale en alg\`ebre homologique et alg\`ebre commutative. Lecture Notes in Mathematics 32. Springer-Verlag, Berlin-New York, 1967.

\bibitem{Arone+Dwyer+Lesh} G.Z. Arone, W.G. Dwyer, K. Lesh. Loop structures in Taylor towers. Algebr. Geom. Topol. 8 (2008) 173--210.

\bibitem{SGA4I} M. Artin, A. Grothendieck, J.-L. Verdier. S\'eminaire de G\'eom\'etrie Alg\'e\-brique du Bois Marie (1963--64) Th\'eorie des topos et cohomologie \'etale des sch\'emas (SGA 4) Vol. I. Lecture Notes in Mathematics 269. Springer-Verlag, 1972.

\bibitem{Badzioch} B. Badzioch. Algebraic theories in homotopy theory. Ann. of Math. 155 (2002) 895--913.

\bibitem{Barwick+Schommer-Pries} C. Barwick, C. Schommer-Pries. On the unicity of the homotopy theory of higher categories. \href{http://arxiv.org/abs/1112.0040}{http://arxiv.org/abs/1112.0040}

\bibitem{Bergner} J.E. Bergner. A model category structure on the category of simplicial categories. Trans. Amer. Math. Soc. 359 (2007) 2043--2058. 

\bibitem{Boardman+Vogt} J.M. Boardman, R.M. Vogt. Homotopy invariant algebraic structures on topological spaces. Lecture Notes in Mathematics 347. Springer-Verlag, Berlin-New York, 1973.

\bibitem{Cordier+Porter:Equivariant} J.-M. Cordier, T. Porter. Categorical aspects of equivariant homotopy. Appl. Categ. Structures 4 (1996) 195--212.

\bibitem{Cordier} J.-M. Cordier. Sur la notion de diagramme homotopiquement coh\'erent. Cahiers Topologie G\'eom. Diff\'erentielle 23 (1982) 93--112.

\bibitem{Cordier+Porter:TAMS} J.-M. Cordier, T. Porter. Homotopy coherent category theory. Trans. Amer. Math. Soc. 349 (1997) 1--54.

\bibitem{Dugger:presentations} D. Dugger. Combinatorial model categories have presentations. Adv. Math. 164 (2001) 177--201.

\bibitem{Dugger+Hollander+Isaksen} D. Dugger, S. Hollander, D.C. Isaksen. Hypercovers and simplicial pre\-sheaves. Math. Proc. Cambridge Philos. Soc. 136 (2004) 9--51.

\bibitem{Dwyer:Localizations} W.G. Dwyer. Localizations. Axiomatic, enriched and motivic homotopy theory, 3--28. Kluwer, Dordrecht, 2004. 

\bibitem{Dwyer+Kan:Simplicial} W.G. Dwyer, D.M. Kan. Simplicial localizations of categories. J. Pure Appl. Algebra 17 (1980) 267--284.

\bibitem{Dwyer+Kan:Calculating} W.G. Dwyer, D.M. Kan. Calculating simplicial localizations. J. Pure Appl. Algebra 18 (1980) 17--35. 

\bibitem{Dwyer+Kan:Function} W.G. Dwyer, D.M. Kan. Function complexes in homotopical algebra. Topology 19 (1980) 427--440.

\bibitem{Fiedorowicz} Z. Fiedorowicz. A counterexample to a group completion conjecture of J.C. Moore. Algebr. Geom. Topol. 2 (2002) 33--35.

\bibitem{Goerss+Jardine} P.G. Goerss, J.F. Jardine. Simplicial homotopy theory. Progress in Mathematics 174. Birkh\"auser Verlag, Basel, 1999.

\bibitem{Hirschhorn} P.S. Hirschhhorn. Model categories and their localizations. Mathematical Surveys and Monographs 99. American Mathematical Society, Providence, RI, 2003.

\bibitem{Hollender+Vogt} J. Hollender, R.M. Vogt. Modules of topological spaces, applications to homotopy limits and~$E_\infty$ structures. Arch. Math. 59 (1992) 115--129.

\bibitem{Illusie:I} L. Illusie. Complexe cotangent et d\'eformations. I. Lecture Notes in Mathematics 239. Springer-Verlag, Berlin-New York, 1971.

\bibitem{Illusie:II} L. Illusie. Complexe cotangent et d\'eformations. II. Lecture Notes in Mathematics 283. Springer-Verlag, Berlin-New York, 1972.

\bibitem{Joyal} A. Joyal. Quasi-categories and Kan complexes. J. Pure Appl. Algebra 175 (2002) 207--222.

\bibitem{Kan+Thurston} D.M. Kan, W.P. Thurston. Every connected space has the homology of a~$\rmK(\pi,1)$. Topology 15 (1976) 253--258. 

\bibitem{Kelly} G.M. Kelly. Basic concepts of enriched category theory. Cambridge University Press, Cambridge-New York, 1982.

\bibitem{Lawvere} F.W. Lawvere. Functorial semantics of algebraic theories. 
Proc. Nat. Acad. Sci. U.S.A. 50 (1963) 869--872. 

\bibitem{Lurie} J. Lurie. Higher topos theory. Annals of Mathematics Studies 170. Princeton University Press, Princeton, NJ, 2009.

\bibitem{McDuff} D. McDuff. On the classifying spaces of discrete monoids. Topology 18 (1979) 4, 313--320. 

\bibitem{Meyer} J.-P. Meyer. Bar and cobar constructions. I. J. Pure Appl. Algebra 33 (1984) 163--207. 

\bibitem{Quillen:Homotopical} D.G. Quillen. Homotopical algebra. Lecture Notes in Mathematics 43. Springer-Verlag, Berlin-New York, 1967.

\bibitem{Quillen:commutative_rings} D.G. Quillen. On the (co-)homology of commutative rings. Applications of Categorical Algebra (Proc. Sympos. Pure Math., Vol. XVII, New York, 1968) 65--87. Amer. Math. Soc., Providence, R.I. 1970.

\bibitem{Schwede} S. Schwede. Stable homotopy of algebraic theories. Topology 40 (2001) 1--41.

\bibitem{Szymik} M. Szymik. Homotopy coherent centers versus centers of homotopy categories. \href{http://arxiv.org/abs/1305.3029}{http://arxiv.org/abs/1305.3029}

\end{thebibliography}
\end{document}